\newtheoremstyle{mythmstyle}
  {\topsep}
  {\topsep}
  {\itshape}
  {}
  {\bfseries \sffamily}
  {.}
  {.5em}
  {}
\newtheoremstyle{mydefstyle}
  {\topsep}
  {\topsep}
  {\normalfont}
  {}
  {\bfseries \sffamily}
  {.}
  {.5em}
  {}
\theoremstyle{mythmstyle}
\newtheorem{thm}{Theorem}[section]      
\newtheorem{prop}[thm]{Proposition}  
\newtheorem{cor}[thm]{Corollary}      
\newtheorem{lemma}[thm]{Lemma}           
\theoremstyle{mydefstyle}
\newtheorem{defin}[thm]{Definition}
\newtheorem{rmk}[thm]{Remark}
\DeclareMathOperator{\grad}{grad}
\DeclareMathOperator{\dvg}{div}
\newcommand{\e}{\varepsilon}
 \newcommand{\bundle}[2]{\mathit P^{#1}#2}
\newcommand{\grass}[2]{\mathcal{G}_{or}^{#1}(#2)} 
\newcommand{\liealgebra}[1]{\mathfrak{o}} 
\newcommand{\D}[1]{\frac{D}{d#1}}                 
\newcommand{\der}[1]{\frac{d}{d#1}}               
\newcommand{\parder}[1]{\frac{\partial}{\partial #1}} 
\newcommand{\tkm}[2]{\mathit T^{#1}#2}            
\newcommand{\fibre}[2]{\mathit O(#1)/O(#1-#2)}
\newcommand{\tvarphi}{\tilde{\varphi}}
\DeclareMathOperator*{\bigunion}{\bigcup\limits}
\newcommand{\Bigvert}{\Big\vert}
\newcommand{\bigvert}{\big\vert}
\newcommand{\R}{\mathbb{R}} 
  \newcommand{\look}[1]{}
  \newcommand{\lookM}[1]{}%
  \newcommand{\markerM}{\fbox{\rule{0pt}{0.1ex}\textbf{Michela}}}
  \newcommand{\look}[1]{\markerO \textbf{*}
    \footnote{ #1 }}
  \newcommand{\lookM}[1]{\markerM\textbf{*}
    \footnote{\textbf{Michela:} #1 }}
\title{Pestov's Identity on frame bundles and applications
	\ifthenelse{\isundefined \draft}{}
 	{\newline \upshape{--- DRAFT---}}}
\author{Michela Egidi}
\date{\today}}  
\date{\today, \thistime,  \emph{File:} \texttt{\jobname.tex}}} 
\begin{document}

\maketitle


\begin{abstract}

In this article we lift Pestov's Identity on the tangent bundle of a Riemannian manifold $M$ to the bundle of $k$-tuples of tangent vectors. We also derive an integrated version and a restriction to the frame bundle $\bundle k M$  of $k$-frames. Finally, we discuss a dynamical application for the parallel transport on $\grass k M$, the Grassmannian of oriented $k$-planes of $M$.

\end{abstract}

\tableofcontents

\section{Introduction}
Pestov's Identity links the generator of the geodesic flow on the tangent bundle of a manifold with the Riemannian curvature tensor and other geometrically motivated differential operators. This identity has many application in dynamics and in the solution of inverse problems such as the X-ray transform and the boundary rigidity problem. 

Pestov and Sharafutdinov \cite{pes-shar:88} introduced this identity in order to derive some useful estimates on symmetric tensor fields, and they give an answer to a question related to tomography, mathematical transport theory and other disciplines, i.e., how uniquely a symmetric tensor field $f$ on a negatively curved manifold $M$ is determined by its integrals over all the geodesics in $M$. Since then, Pestov's Identity has been a key tool in giving answers to this kind of questions. For its main applications in this direction we refer the reader to  \cite{anik-rom:97}, \cite{pat-salo-uhl:13}, \cite{pes-shar:88}, the survey \cite{pat-salo-uhl:14} and the references therein. 

Another application of Pestov's Identity is in the topic of spectral rigidity. Croke and Sharafutdinov \cite{croke-shar:98} used it to prove that a compact manifold of negative curvature is spectrally rigid, i.e., the Laplace-Beltrami spectra of a family of deformed metrics on $M$ are different up to trivial deformation. This also generalizes work of Guillemin and Kazhdan \cite{gui-kaz:80} and Min-Oo \cite{min-oo:85}. 

Pestov's Identity has also been adapted to magnetic and Anosov flows (see for example \cite{ainsworth:13}, \cite{dar-pat:05}) in relation to magnetic tomography and the boundary rigidity problem.

A coordinate-free proof of the identity is given by Knieper in his survey on hyperbolic dynamics and Riemannian geometry \cite{knieper:02}.

In this article we lift the original Pestov Identity on the tangent bundle of a compact manifold $M$ to the bundle of $k$-tuples of tangent vectors and also present an integrated version and its restriction to the bundle $\bundle k M$ of oriented orthonormal $k$-frames. In this new setting, the tangent bundle $TM$ and the unit tangent bundle $SM$ are replaced by $\tkm k M$, the space of $k$-tuples of vectors of $TM$, and $\bundle k M$, the frame bundle, respectively, and the generators of the frame flows play the role of the generator of the geodesic flow. 

The frame flow $F^1_t$ is the parallel transport of a frame $f$ along the geodesic determined by its first vector. It has been studied in relation to ergodicity. Brin and Gromov \cite{brin-gromov:80}, Brin and Karcher \cite{brin-karcher:84} and, more recently, Burns and Pollicot \cite{burns-pollicott:03} proved that the frame flow is ergodic under additional dimension and negative curvature conditions, like pinching. Ergodicity properties of the frame flow on (higher rank) locally symmetric spaces of non-compact type were proved in \cite{maus:05}.

The statement of the Lifted Pestov Identity and its integrated version over $\bundle k M$ can be found in Section 3.2, see Theorems \ref{thm:pestov}, and Section 3.3, see Theorem \ref{thm:integral_pestov}. In particular, we derive a new identity for smooth functions on $\bundle k M$ invariant under one of the frame flows, involving only the $L^2$-norm of the generators of the frame flows and the Riemannian curvature tensor (see Corollary \ref{cor:invariance}).

As an application, we present a dynamical property of smooth functions on $\grass k M$, the oriented $k$-th Grassmannian of $M$. We define $\grass k M$ as the set of all linear $k$-planes of $TM$ together with an intrinsic orientation. We distinguish between intrinsic and non-intrinsic parallel transport of oriented $k$-planes: a parallel transport of an oriented $k$-plane $A_{or}$ along a geodesic $c_v$ on $M$ is \emph{intrinsic} if $v\in A_{or}$ and \emph{non-intrinsic}, otherwise (see Figure \ref{fig:pt}).

\begin{figure}[htb]\label{fig:pt}
\includegraphics[trim = 0mm -16mm 0mm 35mm, clip, scale=0.5]{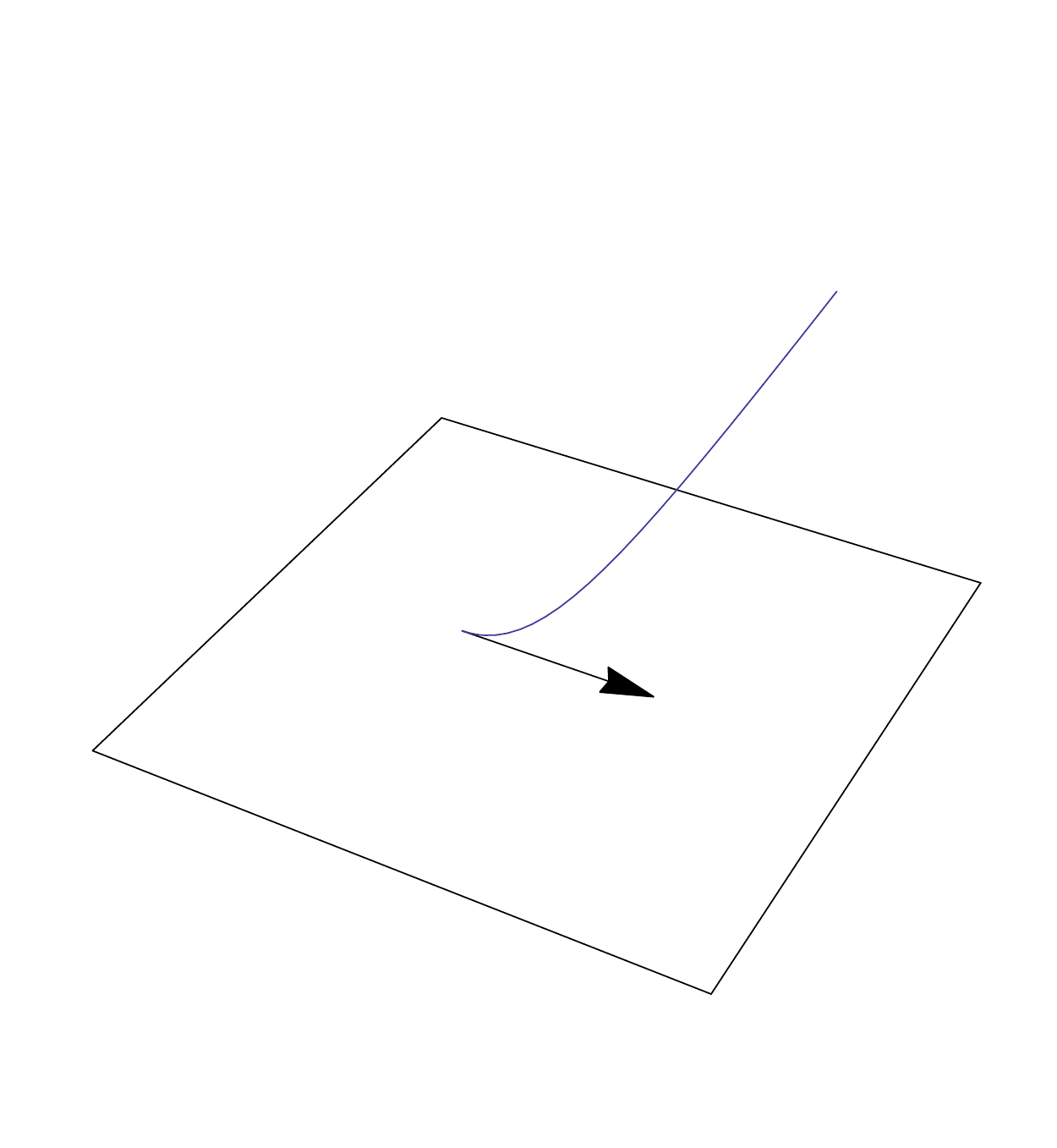}\qquad
\includegraphics[trim = 0mm 2mm 0mm 0mm, clip, scale=0.5]{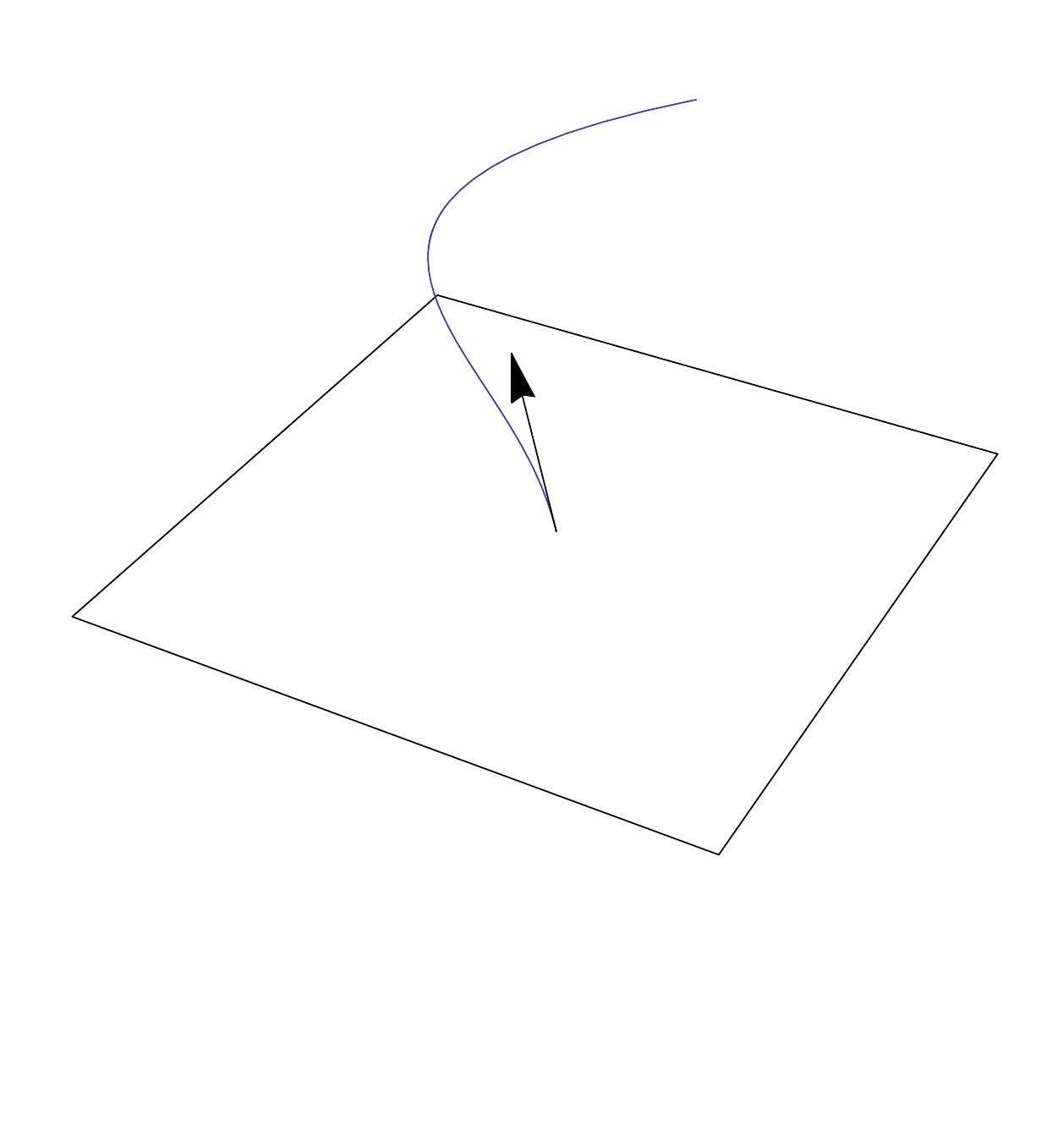}
\vspace{-13mm}\caption{From left to right: example of intrinsic and non-intrinsic parallel transport.}
\end{figure}
 Since we have a canonical projection $\tilde{\pi}:\bundle n M\longrightarrow\grass k M$, there is a close link between the setting of the frame bundle $\bundle n M$ and Grassmannians, connecting the parallel transports on $\grass k M$ with the frame flows in $\bundle n M$.

Applying our results to this setting yields the following.
\begin{thm}\label{thm:invariance_grass_intro}
Let $M$ be a compact $n$-dimensional Riemannian manifold with $\mathcal{R}\leq 0$, $1\leq k\leq n$, and let $\varphi\in C^{\infty}(\grass k M)$. If $\varphi$ is invariant under all intrinsic parallel transports then it is also invariant under all parallel transports.
\end{thm}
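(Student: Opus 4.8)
The plan is to transport the statement to the frame bundle $\bundle n M$, where Corollary \ref{cor:invariance} is available, and then to descend again along the canonical projection $\tilde\pi:\bundle n M\to\grass k M$. First I would pass to the lift $\tvarphi:=\varphi\circ\tilde\pi\in C^\infty(\bundle n M)$. Writing a frame as $f=(v_1,\dots,v_n)$, the projection $\tilde\pi$ records the oriented $k$-plane $A_{or}=\mathrm{span}(v_1,\dots,v_k)$, so that the frame flow $F^i_t$ --- parallel transport of $f$ along the geodesic $c_{v_i}$ --- projects under $\tilde\pi$ to the parallel transport of $A_{or}$ along $c_{v_i}$. For $1\le i\le k$ the generating direction $v_i$ lies in $A_{or}$ and the transport is intrinsic; for $k<i\le n$ one has $v_i\perp A_{or}$, so the transport is orthogonal and in particular non-intrinsic. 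Consequently the hypothesis that $\varphi$ is invariant under all intrinsic parallel transports translates into $X_i\tvarphi=0$ for $i=1,\dots,k$, where $X_i$ denotes the generator of $F^i_t$.

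Next I would apply Corollary \ref{cor:invariance}, a consequence of the integrated Pestov Identity (Theorem \ref{thm:integral_pestov}), to $\tvarphi$, feeding in the invariance under the intrinsic flow $F^1_t$. The Corollary yields an integrated identity relating the $L^2$-norms of the generators $X_j\tvarphi$ to a curvature term built from $\mathcal R$; under the standing hypothesis $\mathcal R\le 0$ this term has a definite sign, which forces $X_j\tvarphi=0$ for every $j=1,\dots,n$. Hence $\tvarphi$ is invariant under all frame flows $F^1_t,\dots,F^n_t$, so that $\varphi$ is invariant under the parallel transport of $A_{or}$ along each of the geodesics $c_{v_1},\dots,c_{v_n}$.

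It remains to upgrade invariance along these distinguished directions to invariance along every geodesic. The infinitesimal generator of the parallel transport of $A_{or}$ along $c_v$ is the horizontal lift of $v$ for the connection on $\grass k M$ induced by the Levi-Civita connection, and therefore depends \emph{linearly} on $v$. Since $X_j\tvarphi=0$ holds at every frame, I may let the first vector range over all unit vectors of $A_{or}$ and the $(k+1)$-st over all unit vectors of $A_{or}^\perp$; thus the derivative of $\varphi$ vanishes in every direction of $A_{or}$ and of $A_{or}^\perp$, and by linearity in every direction of $T_pM=A_{or}\oplus A_{or}^\perp$. As this holds at each $A_{or}$, the function $\varphi$ is infinitesimally invariant under parallel transport along $c_v$ for all $v$, and integrating along the geodesic gives the full invariance claimed.

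The step I expect to be most delicate is the dictionary set up in the first paragraph: one must verify that $\tilde\pi$ genuinely intertwines each frame flow $F^i_t$ with the corresponding parallel transport on $\grass k M$, with orientations matching, and that the invariance of $\tvarphi$ along the fibres of $\tilde\pi$ is compatible with the hypotheses of Corollary \ref{cor:invariance}. Once this correspondence is secured, the curvature-sign argument and the concluding linearity argument are comparatively routine.
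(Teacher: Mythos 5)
Your skeleton (lift to $\bundle n M$, invoke Corollary \ref{cor:invariance}, descend) is the paper's, but the central step fails as you state it. For a fixed $i$, the right-hand side of (\ref{eq:invariance}) is, in terms of the curvature operator,
\begin{equation*}
\sum_{j=1}^n\int_{\bundle n M}\langle R(w_j,v_j)w_i,v_i\rangle\,d\mu
=\pm\int_{\bundle n M}\Big\langle\mathcal{R}\Big(\sum_{j=1}^n w_j\wedge v_j\Big),\,w_i\wedge v_i\Big\rangle\,d\mu ,
\end{equation*}
with $w_j=\grad^{v,j}_{\liealgebra n}\tvarphi(f)$: this is a \emph{bilinear} pairing of two different $2$-vectors, and a non-positive symmetric operator gives no sign whatsoever on $\langle\mathcal{R}\alpha,\beta\rangle$ when $\alpha\neq\beta$. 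So feeding in invariance under $F^1_t$ alone and declaring the curvature term to "have a definite sign" does not work (except when $k=1$, which is exactly Knieper's case, Corollary \ref{cor:knieper_result}). The paper needs two ingredients absent from your proposal: (a) invariance under \emph{all} intrinsic flows $G^1,\dots,G^k$, summing (\ref{eq:invariance}) over $i=1,\dots,k$ so that the second slot becomes $\sum_{i=1}^k w_i\wedge v_i$; and (b) Lemma \ref{lem:gradv_span}, which uses precisely the fact that $\tvarphi=\varphi\circ\tilde{\pi}$ is constant along the fibres of $\tilde{\pi}$ (invariance under $SO(k)\times SO(n-k)$) to show $w_j\in span\{v_{k+1},\dots,v_n\}$ for $j\leq k$ and $w_j\in span\{v_1,\dots,v_k\}$ for $j\geq k+1$. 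Combined with skew-symmetry of $(\langle w_i,v_j\rangle)_{i,j}$ this gives the key identity $\sum_{j=1}^n w_j\wedge v_j=2\sum_{j=1}^k w_j\wedge v_j$, whence the curvature term becomes $\tfrac{1}{2}\int\langle\mathcal{R}\xi,\xi\rangle\,d\mu\leq 0$ with $\xi=\sum_{j=1}^n w_j\wedge v_j$; only at that point does $\mathcal{R}\leq 0$ force $\|G^j\tvarphi\|_{L^2}=0$ for all $j$. You mention the fibre invariance only as a "compatibility" check on the hypotheses of the Corollary (which in fact applies to any smooth function on $\bundle n M$ invariant under one frame flow), and you label the sign argument "comparatively routine"; it is, on the contrary, the one place where the Grassmannian structure enters, and without it the proof collapses.

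The rest is sound. The dictionary in your first paragraph is exactly the paper's equation (\ref{eq:flow_transport}), and your closing argument --- the derivative of $\varphi$ under parallel transport in direction $v$ is the differential of $\varphi$ on the horizontal lift of $v$, hence linear in $v$; it vanishes for $v\in A_{or}$ and $v\in A_{or}^{\perp}$ once all $G^j\tvarphi=0$, hence for all $v\in T_pM$ by $T_pM=A_{or}\oplus A_{or}^{\perp}$ --- is correct and in fact more careful than the paper's one-line conclusion, since frames adapted to $A_{or}$ only contain vectors lying in $A_{or}\cup A_{or}^{\perp}$, so invariance under all frame flows yields infinitesimal invariance only in those directions and genuinely requires your linearity step to cover arbitrary $v$.
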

Here, $\mathcal{R}$ is the curvature operator of the manifold $M$ (see Section 5.2 for the definition).

Combining Theorem \ref{thm:invariance_grass_intro} with Berger's classification of holonomy (\cite{berger:03} or \cite{besse:87}), we obtain the following proposition.
\begin{prop}\label{prop:constant_functions_intro}
Let $M$ be a non-flat, compact Riemannian manifold with non-positive curvature operator $\mathcal{R}$. Then the following holds:
\begin{itemize}
\item [(i)] If $M$ is either a K\"ahler manifold of real dimension $2n\geq 4$ or a Quaternion-K\"ahler manifold of real dimension $4n\geq 8$ or a locally symmetric space of non-constant curvature (i.e., not the real hyperbolic space), then there exist smooth,\emph{ non-constant} functions on $\grass 2 M$ or $\grass 4 M$ which are invariant under all parallel transports.
\item [(ii)] If $M$ is not one of the exceptions in $(i)$, then, for all $k\leq\dim M$, any smooth function on $\grass k M$ invariant under all parallel transports is necessarily constant.
\end{itemize}
\end{prop}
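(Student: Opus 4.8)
The plan is to translate invariance under all parallel transports into invariance under the holonomy group and then read the dichotomy off Berger's list, using the non-positivity of $\mathcal{R}$ only to discard the Ricci-flat entries. To set this up, note that parallel transports along broken geodesics issuing from a fixed point $p$ generate the holonomy group $\mathrm{Hol}_p\subseteq SO(T_pM)$, and by Theorem \ref{thm:invariance_grass_intro} invariance under intrinsic parallel transports already forces invariance under all of them; hence $\varphi\in C^{\infty}(\grass k M)$ is invariant under all parallel transports exactly when its restriction to the fibre $\grass k {T_pM}$ is invariant under the induced $\mathrm{Hol}_p$-action on the oriented Grassmannian. Since $M$ is connected, parallel transport identifies all fibres compatibly, so a non-constant invariant $\varphi$ exists for some $k$ if and only if $\mathrm{Hol}_p$ fails to act transitively on $\grass k {T_pM}$ for some $k$, reducing the proposition to a transitivity question for holonomy representations.

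Next I would apply Berger's classification, passing first to the de Rham decomposition so that the holonomy may be taken irreducible (each irreducible factor again carries $\mathcal{R}\leq 0$). For an irreducible, non-locally-symmetric factor the holonomy is one of $SO(n),U(m),SU(m),Sp(m),Sp(m)Sp(1),G_2,Spin(7)$. Here the hypothesis enters decisively: from $\mathrm{Ric}(X,X)=\sum_i\langle\mathcal{R}(X\wedge e_i),X\wedge e_i\rangle\leq 0$, Ricci-flatness forces every summand to vanish, and since $\mathcal{R}$ is negative semi-definite this gives $\mathcal{R}(X\wedge e_i)=0$; as the decomposable bivectors span $\Lambda^2 T_pM$ we conclude $\mathcal{R}\equiv 0$, contradicting non-flatness. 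This eliminates the Ricci-flat groups $SU(m),Sp(m),G_2,Spin(7)$, leaving only $SO(n)$, $U(m)$, $Sp(m)Sp(1)$, and the locally symmetric case.

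It then remains to settle transitivity. The group $SO(n)$ acts transitively on every oriented Grassmannian, so all invariant functions are constant, which is part (ii). In the Kähler case $U(m)$ preserves the Kähler angle and in the quaternion-Kähler case $Sp(m)Sp(1)$ preserves the fundamental $4$-form, so neither acts transitively on $\grass 2 M$, respectively $\grass 4 M$; the corresponding non-constant invariants of part (i) are $A\mapsto\omega(e_1,e_2)$ and $A\mapsto\Omega(e_1,e_2,e_3,e_4)$ evaluated on an oriented orthonormal basis of $A$, and these are invariant under all parallel transports because $\omega$ and $\Omega$ are parallel. For a locally symmetric space $\nabla\mathcal{R}=0$, so the sectional curvature $A\mapsto K(A)$ descends to a parallel-invariant function on $\grass 2 M$; it is non-constant precisely when the curvature is non-constant, i.e. in every locally symmetric case except real hyperbolic space, where $\mathrm{Hol}=SO(n)$ and we are again in (ii). Assembling the cases gives the stated dichotomy.

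I expect the transitivity bookkeeping of the last step to be the main obstacle. One must confirm that $SO(n)$ is genuinely transitive on all oriented Grassmannians, so that no auxiliary value of $k$ secretly yields a non-constant invariant, and dually that the holonomy-invariant tensors above define \emph{smooth, globally non-constant} functions on $\grass 2 M$ or $\grass 4 M$ rather than merely obstructing transitivity on a small set. The reducible (product) case, and the verification that every locally symmetric space of non-constant curvature is detected by the sectional-curvature invariant, are the delicate points, where the de Rham reduction and a direct inspection of the isotropy representations must be carried out with care.
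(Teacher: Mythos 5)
Your proposal is correct and follows essentially the same route as the paper: part (i) is proved by exhibiting the same parallel tensors (the K\"ahler form, the quaternion-K\"ahler $4$-form, and the curvature tensor via sectional curvature) inducing non-constant invariant functions on $\grass 2 M$ or $\grass 4 M$, and part (ii) by Berger's classification giving holonomy $SO(n)$, whose transitive action on oriented $k$-planes forces invariant functions to be constant. The only difference is one of explicitness: you spell out the de Rham reduction and the elimination of the Ricci-flat holonomies $SU(m)$, $Sp(m)$, $G_2$, $Spin(7)$ via $\mathcal{R}\leq 0$ together with non-flatness, steps the paper leaves implicit in its citation of Berger's classification.
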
   

The paper is organised as follows. In Section 2 we introduce the space $\tkm k M$ of $k$-tuples of vectors in $TM$ and the frame bundle $\bundle k M$ together with the geometrically motivated horizontal and vertical differential operators. In Section 3 we state and prove the Lifted Pestov Identity and its integrated version for smooth functions on $\tkm k M$. Section 4 is dedicated to the restriction of the Integrated Lifted Pestov Identity to smooth functions on $\bundle k M$. In this section we also state and prove a new identity for smooth functions on the frame bundle $\bundle n M$, invariant under one of the frame flows. In Section 5 we discuss the link between the principal bundle $\tkm n M$ and the oriented $k$-th Grassmannian, $\grass k M$, of $M$ and we prove Theorem \ref{thm:invariance_grass_intro} and Proposition \ref{prop:constant_functions_intro}.

\paragraph{Acknowledgement}
This article is part of the author's PhD thesis. The author would like to thank Norbert Peyerimhoff for many useful discussions and support and Gerhard Knieper for access to an unpublished result of his \cite{knieper:preprint} and helpful comments.  

\section{The spaces $\tkm k M$ and $\bundle k M$, their geometry and differential operators}
Let $(M,g)$ be a compact Riemannian manifold of dimension $n$, let $T_pM$ its tangent space at a point $p\in M$ and let $1\leq k\leq n$. We define the space
\begin{equation*}\label{eq:tkm}
\tkm k M:=\bigunion_{p\in M}\underbrace{T_pM\times\ldots\times T_pM}_{k- times}
\end{equation*} 
and a canonical projection map $\pi^k:\tkm k M\longrightarrow M$ such that $f=(u_1,\ldots,u_k)\mapsto p$ if $p=\pi(u_i)$ for all $i=1,\ldots k$ with $\pi:TM\rightarrow M$ and $TM$ the tangent bundle of $M$.
The frame bundle of orthonormal $k$-frames over $M$ is denoted by 
\begin{equation*}\label{eq:frame.bundle}
\bundle k M=\{(v_1,\ldots,v_k)\in\tkm k M\;\Bigvert\;\langle v_i, v_j\rangle=\delta_{ij}\}
\end{equation*}
and sits inside $\tkm k M$. The orthogonal group $O(k)$ acts on the right on $\bundle k M$ and there is a canonical projection on $M$, denoted again with $\pi^k$, which is a fibration where the fibre $F_p$ is the Stiefel manifold of orthonormal $k$-frames over $\R^n$, i.e., $F_p\cong\fibre n k$.

In particular, for $k=1$ we have $\tkm 1 M=TM$ and $\bundle 1 M=SM$, where $SM$ denotes the unit tangent bundle of $M$. On the other hand, when $k=n$, $\bundle n M$ is a principal bundle whose fibre isomorphic to $O(n)$.

\subsection{Geometry of $\tkm k M$ and $\bundle k M$}

Let $f=(v_1,\ldots,v_k)\in\tkm k M$, $p=\pi^k(f)$ and let $X=(V_1,\ldots V_k):(-\e,\e)\longrightarrow\tkm k M$ be a curve on $\tkm k M$ with $V_i$ vector fields on $M$ along the footpoint curve $\pi^k\circ X$ on $M$. Then
\begin{equation*}
T_f\tkm k M\ni X'(0)=\Big(\der t\Bigvert_{t=0}\big(\pi^k\circ X\big)(t),\D t\Bigvert_{t=0}V_1(t) ,\ldots,\D t\Bigvert_{t=0}V_k(t)\Big)
\end{equation*}
Therefore, the tangent space of $\tkm k M$ at $f$ with $\pi^k(f)=p$ is given by  
\begin{equation*}\label{eq:tg.space1}
T_f\tkm k M=\underbrace{T_pM\times\ldots\times T_pM}_{(k+1)-times}
\end{equation*}

Let now $f=(v_1,\ldots,v_k)\in\bundle k M$. Any curve $X$ in $\bundle  k M$ through $f$ is given as above with the additional conditions that the $V_i$'s are orthonormal vector fields along the same footpoint curve with $V_i(0)=v_i$. The tangent vector $X'(0)\in T_f\bundle k M$ is again described as above. However, since $\langle V_i(t),V_j(t)\rangle=\delta_{ij}$ for all $t$, differentiation at $t=0$ yields  
$$\langle\D t\Bigvert_{t=0}V_i(t),v_j\rangle=-\langle\D t\Bigvert_{t=0}V_j(t),v_i\rangle$$
Therefore, the tangent space of $\bundle k M$ at $f$ is
\begin{equation}\label{eq:tg.space2}
T_f\bundle k M=\Big\{(u,w_1,\ldots,w_k)\in T_pM\times\ldots\times T_pM\;\Bigvert\;\big(\langle w_i,v_j\rangle\big)_{ij}\in\mathfrak{o}(k)\Big\}
\end{equation}
where $\mathfrak{o}(n)$ is the Lie algebra of $O(k)$. $T_f\bundle k M$ splits orthogonally into a horizontal and a vertical distribution, $\mathcal{H}_f$ and $\mathcal{V}_f$, described as follows. 
$$\mathcal{H}_f=\Big\{(u,0,\ldots,0)\in T_pM\times\ldots\times T_pM\Big\}\cong T_pM$$
$$\mathcal{V}_f=\Big\{(0,w_1,\ldots,w_k)\in T_pM\times\ldots\times T_pM\;\Bigvert\;\big(\langle w_i,v_j\rangle\big)_{ij}\in\mathfrak{o}(k)\Big\}\cong\mathfrak{o}(k)$$
Analogously, any vector $x\in T_f\tkm k M$ can be written as a sum of $(u,0,\ldots,0)$ and $(0,w_1,\ldots,w_k)$, where now there are no conditions on the $w_i$'s. We call these two summands the horizontal and vertical component, respectively, and denote them by $x^h,x^v=(x_1^v,\ldots,x_k^v)$.

Let $x=(x^h,x_1^v,\ldots,x_k^v),y=(y^h,y_1^v,\ldots,y_k^v)\in T_f\tkm k M$ with $\pi^k(f)=p$. We define the metric on $\tkm k M$ as
\begin{equation*}\label{eq:metric}
\langle x,y\rangle_{T_f\tkm k M}:=\langle x^h,y^h\rangle_{T_pM}+\sum_{i=1}^k\langle x^v_i,y^v_i\rangle_{T_pM}
\end{equation*}
As a submanifold of $\tkm k M$, $\bundle k M$ inherits this metric and horizontal and vertical component of a vector in $\bundle k M$ are then pairwise orthogonal.
\bigskip

Let $f=(v_1,\ldots,v_k)\in T_f\tkm k M$, and let $c_{v_i}$ be the geodesic on $M$ with $c_{v_i}(0)=\pi^k(f)$ and $c'_{v_i}(0)=v_i$. The $i$-th frame flow, $i=1,\ldots,k$, is the map
\begin{gather*}
F^i_t:\tkm k M\longrightarrow\tkm k M\\
f=(v_1,\ldots,v_k)\mapsto f_{v_i}(t)= ((v_1)_{v_i}(t),\ldots,(v_k)_{v_i}(t))
\end{gather*}
where $f_{v_i}(t)$ denotes the parallel transport of the frame $f$ along the geodesic $c_{v_i}$, i.e., every vector $v_j$ of $f$ is parallel transported along $c_{v_i}$. In particular, $(v_i)_{v_i}(t)=\phi^t(v_i)$, where $\phi^t$ denotes the geodesic flow on $TM$.

Its infinitesimal generator is given by
$$G^i(f)=\der t\Bigvert_{t=0}F^i_t(f)\cong\Big(\der t\Bigvert_{t=0}c_{v_i}(t),0,\ldots,0\Big)=(v_i,0,\ldots,0)$$ i.e., $G^i(f)$ is a horizontal vector of $T_f\tkm k M$.

Finally, we introduce the notion of semi-basic vector field. We define the pullback bundle $\pi^{\ast}(\tkm k M)=\{(v,f)\in TM\times \tkm k M\;\vert\;\pi(v)=\pi^k(f)\}\subset TM\times \tkm k M$. A semi-basic vector field is an element of $\mathfrak{X}(\pi^{\ast}(\tkm k M))=\{X:\tkm k M\rightarrow TM\mbox{ smooth }\;\vert\;X(f)\in T_{\pi^k(f)}M\;\;\forall f\in\tkm k M\}$.

\subsection{Differential operators}
As in the classical case of Riemannian manifolds, we have differential operators on $\tkm k M$ and $\bundle k M$ such as the gradient of a smooth function, the covariant derivative and the divergence. However, here we need to distinguish between  horizontal and vertical distributions when defining these operators.

From now on, all inner products are with respect to the metric on $M$, unless stated otherwise.

Let $\varphi:\tkm k M\longrightarrow\R$ be a smooth function and let $f=(v_1,\ldots,v_k)\in\tkm k M$ with $\pi^k(f)=p$. The gradient of $\varphi$ with respect to the metric on $\tkm k M$ is $$\grad\varphi(f)=(\grad^h\varphi(f),\grad^{v,1}\varphi(f),\ldots,\grad^{v,k}\varphi(f))\in T_f\tkm k M$$
where the components are described intrinsically as follows. 

Let $u\in T_pM$, then 
\begin{gather*}\label{eq:gradients}
\langle\grad^h\varphi(f),u\rangle=\der t\Bigvert_{t=0}\varphi(f_u(t)),\\
\langle\grad^{v,i}\varphi(f),u\rangle=\der t\Bigvert_{t=0}\varphi(v_1,\ldots,v_{i-1},v_i+tu,v_{i+1},\ldots,v_k)\qquad\forall\,i=1,\ldots,k,
\end{gather*}
i.e., the horizontal and $i$-th vertical gradient of $\varphi$ are the derivatives of $\varphi$ along the horizontal curve $t\mapsto f_u(t)$ in $\tkm k M$ and along the vertical curve $t\mapsto v_i+tu$ in the $i$-th $T_pM$ copy of $\tkm k M$, respectively. 

If $f\in\bundle k M$, $\grad\varphi(f)$ defined as above is not an element of $T_f\bundle k M$ because the $k$-tuple $(\grad^{v,1}\varphi(f),\ldots,\grad^{v,k}\varphi(f))$ does not satisfy the constraints in (\ref{eq:tg.space2}). In order to obtain a vector in $T_f\bundle k M$, we need to project the vertical gradient into $T_f\bundle k M$ orthogonally. The orthogonal projection of $\grad^{v,i}\varphi(f)$ into $T_f\bundle k M$ for $f=(v_1,\ldots,v_k)$ is defined as 
\begin{equation}\label{eq:gradv_proj}
\grad_{\liealgebra k}^{v,i}\varphi(f):=\grad^{v,i}\varphi(f)-\frac{1}{2}\sum_{j=1}^k\Big(\langle\grad^{v,i}\varphi(f),v_j\rangle+
\langle\grad^{v,j}\varphi(f),v_i\rangle\Big)v_j
\end{equation}
It is easy to verify that the matrix $\big(\langle\grad_{\liealgebra k}^{v,i}\varphi(f),v_j\rangle\big)_{i,j}$ is skew-symmetric and so the vector $(\grad^h\varphi(f),\grad_{\liealgebra k}^{v,1}\varphi(f),\ldots,\grad_{\liealgebra k}^{v,k}\varphi(f))$ belongs to $T_f\bundle k M$.

\bigskip

Let $X:\tkm k M\longrightarrow TM$ be a semi-basic vector field. The horizontal and $i$-th vertical covariant derivative of $X$ with respect to $u\in T_pM$ are given by
\begin{gather*}\label{eq:cov.der}
\nabla^h_u X(f)=\D t\Bigvert_{t=0}X(f_u(t)),\\
\nabla^{v,i}_u X(f)=\D t\Bigvert_{t=0}X(v_1,\ldots,v_{i-1},v_i+tu,v_{i+1},\ldots,v_k).
\end{gather*}
Consequently, we define the horizontal and $i$-th vertical divergence as follows.
\begin{equation*}\label{eq:div}
\dvg^hX(f)=\sum_{i=1}^n\langle\nabla^h_{e_i}X(f),e_i\rangle\qquad\mbox{ and }\qquad
\dvg^{v,i}X(f)=\sum_{i=1}^n\langle\nabla^{v,i}_{e_i}X(f),e_i\rangle,
\end{equation*}
where $e_1,\ldots, e_n$ is an orthonormal basis of $T_pM$ for $p=\pi^k(f)$.

\section{Lifted Pestov's Identity}
In this section we prove Pestov's Identity for $C^{\infty}$-functions on $\tkm k M$ and we state its integrated version over $\bundle k M$.

The argument for this lifted version is similar to the one given in Knieper's Appendix in \cite{knieper:02} for smooth functions on $TM$.
\subsection{Preliminary lemmas}
\begin{lemma}\label{lem:symmetries} 
Let $\varphi\in C^{\infty}(\tkm k M)$, $f\in\tkm k M$ and $u,w\in T_pM$ with $p=\pi^k(f)$. Let $i=1,\ldots, k$, then 
\begin{equation}\label{eq:sym_grad}
 \langle\nabla^{v,i}_w\grad^h\varphi(f),u\rangle=
\langle\nabla^h_u\grad^{v,i}\varphi(f),w\rangle.
\end{equation}
In particular, it follows
\begin{equation}\label{eq:sym_div}
\dvg^{v,i}\grad^h\varphi(f)=\dvg^h\grad^{v,i}\varphi(f).
\end{equation}
\end{lemma}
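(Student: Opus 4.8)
The plan is to recognise both sides of (\ref{eq:sym_grad}) as mixed second-order derivatives of $\varphi$ evaluated along one and the same two-parameter family in $\tkm k M$, and then to appeal to the symmetry of mixed partial derivatives. Fix $f=(v_1,\ldots,v_k)$ with $\pi^k(f)=p$ and $u,w\in T_pM$, let $c_u$ be the geodesic with $c_u(0)=p$, $c_u'(0)=u$, and denote by $w(t)$ and $(v_j)_u(t)$ the parallel transports of $w$ and of the $v_j$ along $c_u$. Define
\[
\Phi(s,t):=\big((v_1)_u(t),\ldots,(v_i)_u(t)+s\,w(t),\ldots,(v_k)_u(t)\big)\in\tkm k M.
\]
By linearity of parallel transport, $\Phi(s,t)$ is simultaneously the horizontal transport along $c_u$ of the vertically varied frame $f^s=(v_1,\ldots,v_i+sw,\ldots,v_k)$, i.e. $\Phi(s,t)=(f^s)_u(t)$, and the vertical $s$-variation (in the $i$-th slot) of the horizontally transported frame $f_u(t)$. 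This double description is the crux of the argument.

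For the left-hand side, the vertical variation $s\mapsto f^s$ fixes the footpoint $p$, so $\grad^h\varphi(f^s)$ is a curve of vectors in the single space $T_pM$ and the covariant derivative $\nabla^{v,i}_w$ reduces to an ordinary $s$-derivative. Inserting the defining formula for $\grad^h$ along the horizontal curve $t\mapsto(f^s)_u(t)$ gives
\[
\langle\nabla^{v,i}_w\grad^h\varphi(f),u\rangle=\der s\Bigvert_{s=0}\der t\Bigvert_{t=0}\varphi(\Phi(s,t)).
\]
For the right-hand side one must instead handle a genuine covariant derivative, since $\grad^{v,i}\varphi(f_u(t))$ lives in the moving space $T_{c_u(t)}M$. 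Extending $w$ to the parallel field $w(t)$ and using metric compatibility transfers $\D t$ onto the scalar $t\mapsto\langle\grad^{v,i}\varphi(f_u(t)),w(t)\rangle$; expanding $\grad^{v,i}$ along the vertical curve $r\mapsto (v_i)_u(t)+r\,w(t)$ then yields
\[
\langle\nabla^h_u\grad^{v,i}\varphi(f),w\rangle=\der t\Bigvert_{t=0}\der s\Bigvert_{s=0}\varphi(\Phi(s,t)).
\]
Since the two right-hand expressions differ only in the order of differentiation, (\ref{eq:sym_grad}) follows from the commutativity of mixed partials.

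The step requiring the most care — and the main obstacle — is the right-hand side: I must verify that introducing the parallel extension $w(t)$ contributes no extra terms (it does not, precisely because $\D t\,w(t)=0$, so metric compatibility applies cleanly), and that the frame $f_u(t)$ appearing inside $\grad^{v,i}$ is exactly the horizontal transport of $f$, so that its vertical $s$-variation reproduces $\Phi(s,t)$; both points rest on the linearity of parallel transport. Finally, (\ref{eq:sym_div}) is immediate: choosing $u=w=e_a$ in (\ref{eq:sym_grad}) and summing over an orthonormal basis $e_1,\ldots,e_n$ of $T_pM$ gives $\sum_a\langle\nabla^{v,i}_{e_a}\grad^h\varphi(f),e_a\rangle=\sum_a\langle\nabla^h_{e_a}\grad^{v,i}\varphi(f),e_a\rangle$, which is exactly $\dvg^{v,i}\grad^h\varphi(f)=\dvg^h\grad^{v,i}\varphi(f)$.
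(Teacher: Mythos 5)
Your proposal is correct and takes essentially the same route as the paper: both proofs reduce the two sides of (\ref{eq:sym_grad}) to the mixed second partials of $\varphi$ along the very same two-parameter family $\big((v_1)_u(t),\ldots,(v_i)_u(t)+s\,(w)_u(t),\ldots,(v_k)_u(t)\big)$ and conclude by symmetry of mixed partial derivatives, with (\ref{eq:sym_div}) obtained by setting $u=w=e_a$ and summing over an orthonormal basis. You merely spell out two points the paper leaves implicit, namely the linearity of parallel transport and the use of metric compatibility with the parallel extension $(w)_u(t)$.
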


\begin{proof}
Using the definitions of horizontal and $i$-th vertical covariant derivative and gradient we have  
\begin{align*}
\langle\nabla^{v,i}_w\grad^h\varphi(f),u\rangle & =\der t\Bigvert_{t=0}\langle\grad^h\varphi(v_1,\ldots,v_{i-1},v_i+tw,v_{i+1},\ldots,v_k),u\rangle\\
&=\parder t\Bigvert_{t=0}\parder s\Bigvert_{s=0}\varphi((v_1)_u(s),\ldots,(v_i)_u(s)+t(w)_u(s),\ldots,(v_k)_u(s))\\
&=\der s\Bigvert_{s=0}\langle\grad^{v,i}\varphi(f_u(s)),w\rangle\\
&\mbox{\vspace{3mm}}=\langle\nabla^h_u\grad^{v,i}\varphi(f),w\rangle,
\end{align*}
which proves (\ref{eq:sym_grad}). 

Let $e_1,\ldots,e_n$ be an orthonormal basis of $T_pM$. Then,  
\begin{align*}
\dvg^{v,i}\grad^h\varphi(f)=
\sum_{j=1}^n\langle\nabla^{v,i}_{e_j}\grad^h\varphi(f),e_j\rangle
=\sum_{j=1}^n\langle\nabla^h_{e_j}\grad^{v,i}\varphi(f),e_j\rangle
=\dvg^h\grad^{v,j}\varphi(f),
\end{align*}
which concludes the proof.
\end{proof}

\begin{lemma}\label{lem:tensor}
Let $\varphi\in C^{\infty}(\tkm k M)$, $f=(v_1,\ldots,v_k)\in\tkm k M$ and $u,w\in T_pM$ with $p=\pi^k(f)$. Then
\begin{equation}\label{eq:tensor}
\langle\nabla^h_w\grad^h\varphi(f),u\rangle-
\langle\nabla^h_u\grad^h\varphi(f),w\rangle=\sum_{i=1}^k\langle R(\grad^{v,i}\varphi(f),v_i)w,u\rangle,
\end{equation}
and 
\begin{equation}\label{eq:sym_flow}
G^iG^j\varphi(f)-G^jG^i\varphi(f)=\sum_{l=1}^k\langle R(\grad^{v,l}\varphi(f),v_l)v_i,v_j\rangle.
\end{equation}
\end{lemma}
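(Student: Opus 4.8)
The plan is to prove the curvature identity (\ref{eq:tensor}) by a single two-parameter variation, and then to obtain (\ref{eq:sym_flow}) as the special case $w=v_i$, $u=v_j$. For (\ref{eq:tensor}) I would fix a smooth surface $\alpha(s,t)$ in $M$ with $\alpha(0,0)=p$, $\partial_s\alpha(0,0)=w$, $\partial_t\alpha(0,0)=u$ (for instance $\alpha(s,t)=\exp_p(sw+tu)$), together with a frame field $\Phi(s,t)=(\Phi_1(s,t),\ldots,\Phi_k(s,t))$ along $\alpha$ with $\Phi(0,0)=f$ that is parallel along each coordinate axis through the origin, i.e.\ $\tfrac{D}{\partial t}\Phi_j(0,t)=0$ and $\tfrac{D}{\partial s}\Phi_j(s,0)=0$ for all $j$. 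Then $(s,t)\mapsto\Phi(s,t)$ is a surface in $\tkm k M$ lying over $\alpha$, and the whole argument rests on two chain rules that follow directly from the definitions of the gradients and covariant derivatives: for the scalar $\varphi$,
$$\partial_t\varphi(\Phi)=\langle\grad^h\varphi(\Phi),\partial_t\alpha\rangle+\sum_{j}\langle\grad^{v,j}\varphi(\Phi),\tfrac{D}{\partial t}\Phi_j\rangle,$$
and, for a semi-basic field $X$ differentiated covariantly along the surface,
$$\tfrac{D}{\partial s}X(\Phi)=\nabla^h_{\partial_s\alpha}X(\Phi)+\sum_l\nabla^{v,l}_{\frac{D}{\partial s}\Phi_l}X(\Phi).$$
Both simply record that the velocity of a curve in $\tkm k M$ splits into its footpoint (horizontal) part $\partial_s\alpha$ and its fibre (vertical) parts $\tfrac{D}{\partial s}\Phi_l$, so that the horizontal and vertical gradients/covariant derivatives act on the respective components.

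Next I would apply $\partial_s$ to the scalar chain rule and evaluate at $(0,0)$, using metric compatibility and the covariant chain rule with $X=\grad^h\varphi$. The axis-parallel normalization forces $\tfrac{D}{\partial s}\Phi_j(0,0)=\tfrac{D}{\partial t}\Phi_j(0,0)=0$, so every term collapses except three: the term $\langle\nabla^h_w\grad^h\varphi(f),u\rangle$, a manifestly $s\leftrightarrow t$-symmetric term $\langle\grad^h\varphi(f),\tfrac{D}{\partial s}\partial_t\alpha(0,0)\rangle$, and the vertical term $\sum_j\langle\grad^{v,j}\varphi(f),\tfrac{D}{\partial s}\tfrac{D}{\partial t}\Phi_j(0,0)\rangle$. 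Carrying out the same computation in the opposite order produces the same expression with $w,u$ interchanged and with the covariant derivatives of $\Phi_j$ in the reversed order. Since mixed partials of the scalar $\varphi(\Phi)$ commute, and since the symmetric $\alpha$-term cancels by the torsion-free identity $\tfrac{D}{\partial s}\partial_t\alpha=\tfrac{D}{\partial t}\partial_s\alpha$, subtracting the two expressions gives
$$\langle\nabla^h_w\grad^h\varphi(f),u\rangle-\langle\nabla^h_u\grad^h\varphi(f),w\rangle=-\sum_j\langle\grad^{v,j}\varphi(f),\big(\tfrac{D}{\partial s}\tfrac{D}{\partial t}-\tfrac{D}{\partial t}\tfrac{D}{\partial s}\big)\Phi_j(0,0)\rangle.$$
The commutation formula $\big(\tfrac{D}{\partial s}\tfrac{D}{\partial t}-\tfrac{D}{\partial t}\tfrac{D}{\partial s}\big)\Phi_j(0,0)=R(w,u)v_j$ together with the pair- and antisymmetry of $R$ then rewrites the right-hand side as $\sum_j\langle R(\grad^{v,j}\varphi(f),v_j)w,u\rangle$, which is (\ref{eq:tensor}).

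For (\ref{eq:sym_flow}) I would first note that $G^i\varphi(f)=\langle\grad^h\varphi(f),v_i\rangle$, since flowing under $F^i_t$ is precisely parallel transport of $f$ along $c_{v_i}$, so the definition of $\grad^h$ applies verbatim. Differentiating once more along $F^i_t$, and using that the $j$-th frame vector $(v_j)_{v_i}(t)$ stays parallel (hence can be pulled through the covariant derivative), yields $G^iG^j\varphi(f)=\langle\nabla^h_{v_i}\grad^h\varphi(f),v_j\rangle$. Therefore $G^iG^j\varphi-G^jG^i\varphi=\langle\nabla^h_{v_i}\grad^h\varphi,v_j\rangle-\langle\nabla^h_{v_j}\grad^h\varphi,v_i\rangle$, and (\ref{eq:sym_flow}) is exactly (\ref{eq:tensor}) specialized to $w=v_i$, $u=v_j$ (with the summation index relabelled to $l$).

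The one genuinely delicate point is organizing the second-order variational computation so that each nuisance term is either manifestly symmetric in $(s,t)$, and hence cancels, or vanishes by the axis-parallel normalization, leaving the single vertical term that carries all the curvature; one must also keep track of the sign convention for $R$ so that its symmetries deliver the answer in the stated form. Everything else — the two chain rules and the commutation formula $\tfrac{D}{\partial s}\tfrac{D}{\partial t}V-\tfrac{D}{\partial t}\tfrac{D}{\partial s}V=R(\partial_s\alpha,\partial_t\alpha)V$ — is standard once the variation has been set up.
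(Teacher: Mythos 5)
Your proposal is correct and takes essentially the same route as the paper: the paper's proof likewise computes the two mixed derivatives of $\varphi$ along a two-parameter variation $H(t,s)=\big(f_w(t)\big)_{u_w(t)}(s)$ whose frame is parallel along the coordinate axes (your axis-parallel normalization), extracts the vertical correction via the commutation formula $\tfrac{D}{\partial s}\tfrac{D}{\partial t}-\tfrac{D}{\partial t}\tfrac{D}{\partial s}=R(\partial_s\alpha,\partial_t\alpha)$, and then derives (\ref{eq:sym_flow}) exactly as you do, from $G^iG^j\varphi(f)=\langle\nabla^h_{v_i}\grad^h\varphi(f),v_j\rangle$ and the specialization $w=v_i$, $u=v_j$ in (\ref{eq:tensor}). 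The only cosmetic differences are your choice of footpoint surface $\exp_p(sw+tu)$ in place of the paper's geodesic variation, and your packaging of the paper's inline computations as two explicit chain rules.
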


\begin{proof}
We first prove (\ref{eq:tensor}) since (\ref{eq:sym_flow}) follows as a consequence.

Let $H(t,s)=\big(f_w(t)\big)_{u_{w(t)}}(s)$ be a variation in $\tkm k M$, i.e., $H(t,s)=(H_1(t,s),\ldots,H_k(t,s))$ where $H_i(t,s)=\big((v_i)_w(t)\big)_{u_w(t)}(s)$. Then,
\begin{align*}
\langle\nabla^h_w\grad^h\varphi(f),u\rangle&=\der t\Bigvert_{t=0}\langle\grad^h\varphi(f_u(t),u_w(t)\rangle\\
&=\parder t\Bigvert_{t=0}\parder s\Bigvert_{s=0}\varphi\Big((f_w(t))_{u_w(t)}(s)\Big)
=\parder t\Bigvert_{t=0}\parder s\Bigvert_{s=0}\varphi(H(t,s))
\end{align*}
Now,
\begin{equation*}
\begin{split}
\mbox{\hspace{-10mm}}\parder s\Big\vert_{s=0}\parder t\Bigvert_{t=0}&\varphi(H(t,s))
=\parder s\Big\vert_{s=0}\big\langle\grad\varphi(H(0,s)),\parder t\Bigvert_{t=0}H(t,s)\big\rangle_{T_f\tkm k M}\\
&=\parder s\Bigvert_{s=0}\big\langle\grad^h\varphi(H(0,s)),\Big[\parder t\Bigvert_{t=0}H(t,s)\Big]^h\big\rangle\\
&\mbox{\hspace{10mm}}+\parder s\Bigvert_{s=0}\sum_{i=1}^k\big\langle\grad^{v,i}\varphi(H(0,s)),\Big[\parder t\Bigvert_{t=0}H(t,s)\Big]^{v,i}\big\rangle\\
\end{split}
\end{equation*}
\begin{equation*}
\begin{split}
&=\parder s\Bigvert_{s=0}\big\langle\grad^h\varphi(H(0,s)),\der t\Bigvert_{t=0}\big(\pi^k\circ H\big)(t,s)\big\rangle\\
&\mbox{\hspace{10mm}}+\parder s\Bigvert_{s=0}\sum_{i=1}^k\big\langle\grad^{v,i}\varphi(H(0,s)),\D t\Bigvert_{t=0} H_i(t,s)\big\rangle\\
&=\langle\D s\Bigvert_{s=0}\grad^h\varphi(H(0,s)),w\rangle+\langle\grad^h\varphi(f),\underbrace{\D s\Bigvert_{s=0}\parder t\Bigvert_{t=0}\big(\pi^k\circ H\big)(t,s)}_{=\D t\bigvert_{t=0}\parder s\bigvert_{s=0} c_{u_w(t)}(s)
=\D t\bigvert_{t=0}u_w(t)=0}\rangle\\
&+\sum_{i=1}^k\langle\D s\Bigvert_{s=0}\grad^{v,i}\varphi(H(0,s)),\D t\Bigvert_{t=0}H_i(t,0)\rangle+\langle\grad^{v,i}\varphi(f),\D s\Bigvert_{s=0}\D t\Bigvert_{t=0}H_i(t,s)\rangle\\
&=\langle\nabla^h_u\grad^h\varphi(f),w\rangle+\sum_{i=1}^k\langle\grad^{v,i}\varphi(f),\D s\Bigvert_{s=0}\D t\Bigvert_{t=0}H_i(t,s)\rangle,
\end{split}
\end{equation*}
where $c_{u_w(t)}(s)$ is the footpoint curve of $H(t,s)$.

Finally, 
\begin{multline*}
\D s\Bigvert_{s=0}\D t\Bigvert_{t=0}H_i(t,s)=\underbrace{\D t\Bigvert_{t=0}\D s\Bigvert_{s=0}H_i(t,s)}_{=0}\\
+R\Big(\parder s\Bigvert_{s=0}(\pi^k\circ H_i)(0,s),\parder t\Bigvert_{t=0}(\pi^k\circ H_i)(t,0)\Big)H_i(0,0)=R(u,w)v_i.
\end{multline*}
Hence, 
$$\langle\nabla^h_w\grad^h\varphi(f),u\rangle-\langle\nabla^h_u\grad^h\varphi(f),w\rangle=\sum_{i=1}^k\langle R(u,w)v_i,\grad^{v,i}\varphi(f)\rangle.$$
We now prove (\ref{eq:sym_flow}). First, we observe that
\begin{equation}\label{eq:flow_gradh}
G^i\varphi(f)=\frac{d}{dt}\Big\vert_{t=0}\varphi(F^i_t(f))
=\big\langle\grad\varphi(f),\der t\Bigvert_{t=0}F^i_t(f)\big\rangle_{T_f\tkm k M}
=\langle\grad^h\varphi(f),v_i\rangle
\end{equation}
Therefore,
\begin{align*}
G^iG^j\varphi(f)&=G^i\langle \grad^h\varphi(f),v_j\rangle=
\der t\Bigvert_{t=0}\langle\grad^h\varphi(f_{v_i}(t)),(v_j)_{v_i}(t)\rangle\\
&=\langle\D t\Bigvert_{t=0}\grad^h\varphi(f_{v_i}(t)),v_j\rangle=\langle\nabla^h_{v_i}\grad^h\varphi(f),v_j\rangle
\end{align*}
 and $G^jG^i\varphi(f)=\langle\nabla^{h}_{v_j}\grad^h\varphi(f),v_i\rangle$.

Hence, choosing $u=v_j$ and $w=v_i$ in (\ref{eq:tensor}), we obtain (\ref{eq:sym_flow}).
\end{proof}

\begin{lemma}\label{lem:G_gradv}
Let $\varphi\in C^{\infty}(\tkm k M)$ and let $f=(v_1,\ldots,v_k)$. Then, for $i,j,l\in\{1,\ldots,k\}$,
\begin{equation}\label{eq:G_gradv}
\langle\grad^{v,i}G^j\varphi(f),v_l\rangle=
G^j\langle\grad^{v,i}\varphi(f),v_l\rangle+\delta_{ij}G^l\varphi(f).
\end{equation}
\end{lemma}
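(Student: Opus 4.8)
The plan is to compute the left-hand side directly from the definition of the vertical gradient and then reorganise the resulting terms using the symmetry already established in Lemma~\ref{lem:symmetries}. Write $f_t=(v_1,\ldots,v_{i-1},v_i+tv_l,v_{i+1},\ldots,v_k)$ for the vertical curve through $f$ obtained by perturbing the $i$-th vector in the direction $v_l$; its footpoint $p$ is constant in $t$, so along $f_t$ the covariant derivative $\D t$ coincides with the ordinary derivative in the fixed vector space $T_pM$. By the definition of $\grad^{v,i}$,
$$\langle\grad^{v,i}G^j\varphi(f),v_l\rangle=\der t\Bigvert_{t=0}(G^j\varphi)(f_t),$$
and by \eqref{eq:flow_gradh} the integrand is $(G^j\varphi)(f_t)=\langle\grad^h\varphi(f_t),v_j+t\delta_{ij}v_l\rangle$, since the $j$-th vector of $f_t$ equals $v_j+t\delta_{ij}v_l$.

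The first key step is to differentiate this inner product by the product rule. This produces two summands: one in which the derivative falls on $\grad^h\varphi(f_t)$, giving $\langle\nabla^{v,i}_{v_l}\grad^h\varphi(f),v_j\rangle$ by the definition of the $i$-th vertical covariant derivative, and one in which it falls on the second slot, which is nonzero only when $j=i$ and then equals $\langle\grad^h\varphi(f),v_l\rangle=G^l\varphi(f)$ again by \eqref{eq:flow_gradh}. This already isolates the $\delta_{ij}G^l\varphi(f)$ term; the appearance of the Kronecker delta is exactly the bookkeeping subtlety of the argument, reflecting that the vector $v_j$ is itself perturbed precisely when $j=i$.

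It then remains to identify $\langle\nabla^{v,i}_{v_l}\grad^h\varphi(f),v_j\rangle$ with $G^j\langle\grad^{v,i}\varphi(f),v_l\rangle$. Applying the symmetry \eqref{eq:sym_grad} with $w=v_l$ and $u=v_j$ rewrites it as $\langle\nabla^h_{v_j}\grad^{v,i}\varphi(f),v_l\rangle$. Finally I would unwind the frame flow: regarding $\psi(f)=\langle\grad^{v,i}\varphi(f),v_l\rangle$ as a function on $\tkm k M$ whose last slot is the $l$-th frame vector, differentiate $\psi(F^j_t(f))=\langle\grad^{v,i}\varphi(f_{v_j}(t)),(v_l)_{v_j}(t)\rangle$ at $t=0$. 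Since $(v_l)_{v_j}(t)$ is parallel along $c_{v_j}$, its covariant derivative vanishes, so by the definition of $\nabla^h$ the surviving term is $\langle\nabla^h_{v_j}\grad^{v,i}\varphi(f),v_l\rangle=G^j\psi(f)$. Combining the three steps yields \eqref{eq:G_gradv}.

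The main obstacle I anticipate is notational rather than conceptual: correctly tracking which entries of the frame are held fixed and which move under each of the operations $\grad^{v,i}$, $G^j$ and the flow $F^j_t$, in particular that $v_j$ moves under $\grad^{v,i}$ exactly when $j=i$, and that $v_l$ moves under $F^j_t$ but parallelly, hence with vanishing covariant derivative. Once these dependencies are pinned down, every step reduces to the product rule together with the identity \eqref{eq:flow_gradh} and the symmetry from Lemma~\ref{lem:symmetries}.
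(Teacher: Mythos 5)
Your proposal is correct and follows essentially the same route as the paper's own proof: unwinding the definition of $\grad^{v,i}$ along the vertical curve, applying the product rule to $\langle\grad^h\varphi(f_t),v_j+t\delta_{ij}v_l\rangle$ to isolate the $\delta_{ij}G^l\varphi(f)$ term, swapping derivatives via \eqref{eq:sym_grad}, and re-expressing $\langle\nabla^h_{v_j}\grad^{v,i}\varphi(f),v_l\rangle$ as $G^j\langle\grad^{v,i}\varphi(f),v_l\rangle$ using the parallelity of $(v_l)_{v_j}(t)$. The bookkeeping points you flag (the perturbation of $v_j$ exactly when $j=i$, and the vanishing covariant derivative of the transported frame vector) are precisely the ones the paper's computation relies on.
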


\begin{proof}
Using (\ref{eq:flow_gradh}) and (\ref{eq:sym_grad}) we have
\begin{equation*}
\begin{split}
\langle\grad^{v,i}G^j\varphi(f),v_l\rangle&=
\der t\Bigvert_{t=0}G^j\varphi(v_1,\ldots,v_i+tv_l,\ldots,v_k)\\
&=\der t\Bigvert_{t=0}\langle\grad^h\varphi(v_1,\ldots,v_i+tv_l,\ldots,v_k),v_j+\delta_{ij}tv_l\rangle\\
&=\langle\nabla^{v,i}_{v_l}\grad^h\varphi(f),v_j\rangle+
\delta_{ij}\langle\grad^h\varphi(f),v_l\rangle\\
&=\langle\nabla^h_{v_j}\grad^{v,i}\varphi(f),v_l\rangle+
\delta_{ij}G^l\varphi(f)\\
&=\der t\Bigvert_{t=0}\langle\grad^{v,i}\varphi(f_{v_j}(t)),(v_l)_{v_j}(t)\rangle+\delta_{ij}G^l\varphi(f)\\
&=G^j\langle\grad^{v,i}\varphi(f),v_l\rangle+\delta_{ij}G^l\varphi(f).\qedhere
\end{split}
\end{equation*}
\end{proof}

\subsection{Lifted Pestov's Identity}
We are now ready to state the main theorem of this section (compare it with \cite[Theorem 1.1, p. 538]{knieper:02}.

\begin{thm}[Lifted Pestov's Identity]\label{thm:pestov}
Let $\varphi\in C^{\infty}(\tkm k M)$ and consider the following semi-basic vector fields for $i,j=1,\ldots,k$
\begin{gather*}
Y^{j,i}(f)=\langle\grad^h\varphi(f),\grad^{v,j}\varphi(f)\rangle v_i-\big(G^i\varphi(f)\big)\grad^{v,j}\varphi(f)\\
Z^i(f)=\big(G^i\varphi(f)\big)\grad^h\varphi(f)
\end{gather*}
Then,
\begin{multline}\label{eq:pestov}
\dvg^{v,j}Z^i(f) +\dvg^hY^{j,i}(f) +\delta_{ij}\|\grad^h\varphi(f)\|^2=\\
=\sum_{l=1}^k\langle R(\grad^{v,l}\varphi(f),v_l)v_i,\grad^{v,j}\varphi(f)\rangle+2\langle\grad^h\varphi(f),\grad^{v,j} G^i\varphi(f)\rangle.
\end{multline}
\end{thm}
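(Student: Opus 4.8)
The plan is to compute the two divergences $\dvg^{v,j}Z^i(f)$ and $\dvg^h Y^{j,i}(f)$ separately via the Leibniz rule, add them, and then rewrite the resulting terms using Lemmas \ref{lem:symmetries}--\ref{lem:G_gradv} until the curvature sum and the factor-two term emerge. Throughout I will use the elementary but crucial observation that the semi-basic field $f\mapsto v_i$ has \emph{vanishing} horizontal covariant derivative: parallel transport along $c_u$ sends $v_i$ to the parallel field $(v_i)_u(t)$, so $\nabla^h_u v_i=0$. This is what makes the horizontal divergence of the scalar-times-$v_i$ summand in $Y^{j,i}$ collapse to $G^i$ applied to that scalar.

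First I would expand $\dvg^{v,j}Z^i$. Writing $Z^i=(G^i\varphi)\grad^h\varphi$ and applying the product rule for $\nabla^{v,j}$ produces a derivative-of-the-scalar term $\langle\grad^h\varphi,\grad^{v,j}G^i\varphi\rangle$ plus $(G^i\varphi)\dvg^{v,j}\grad^h\varphi$; the commutation identity (\ref{eq:sym_div}) turns the latter into $(G^i\varphi)\dvg^h\grad^{v,j}\varphi$. Expanding $\dvg^h Y^{j,i}$ similarly, the first summand contributes $G^i\langle\grad^h\varphi,\grad^{v,j}\varphi\rangle$ (here using $\nabla^h v_i=0$ and (\ref{eq:flow_gradh})), while the second contributes $-\langle\grad^h G^i\varphi,\grad^{v,j}\varphi\rangle-(G^i\varphi)\dvg^h\grad^{v,j}\varphi$. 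On adding the two divergences, the two copies of $(G^i\varphi)\dvg^h\grad^{v,j}\varphi$ cancel, leaving three surviving terms.

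Next I would dispose of these three terms in turn. Applying metric-compatibility of the frame flow (whose footpoint curve is $c_{v_i}$) to the Leibniz term $G^i\langle\grad^h\varphi,\grad^{v,j}\varphi\rangle$ splits it as $\langle\nabla^h_{v_i}\grad^h\varphi,\grad^{v,j}\varphi\rangle+\langle\grad^h\varphi,\nabla^h_{v_i}\grad^{v,j}\varphi\rangle$. The first of these is precisely the left-hand side of the tensor identity (\ref{eq:tensor}) with $w=v_i$, $u=\grad^{v,j}\varphi$, so it equals the curvature sum $\sum_l\langle R(\grad^{v,l}\varphi,v_l)v_i,\grad^{v,j}\varphi\rangle$ plus the leftover $\langle\nabla^h_{\grad^{v,j}\varphi}\grad^h\varphi,v_i\rangle$. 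That leftover cancels the term $-\langle\grad^h G^i\varphi,\grad^{v,j}\varphi\rangle$ once one notes, from $G^i\varphi=\langle\grad^h\varphi,v_i\rangle$ and $\nabla^h v_i=0$, the identity $\langle\grad^h G^i\varphi,u\rangle=\langle\nabla^h_u\grad^h\varphi,v_i\rangle$.

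Finally, for the remaining term $\langle\grad^h\varphi,\nabla^h_{v_i}\grad^{v,j}\varphi\rangle$ I would establish the auxiliary identity
\begin{equation*}
\nabla^h_{v_i}\grad^{v,j}\varphi(f)=\grad^{v,j}(G^i\varphi)(f)-\delta_{ij}\grad^h\varphi(f),
\end{equation*}
obtained by differentiating $G^i\varphi=\langle\grad^h\varphi,v_i\rangle$ in the $j$-th vertical direction and invoking the symmetry (\ref{eq:sym_grad}). This yields the second copy of $\langle\grad^h\varphi,\grad^{v,j}G^i\varphi\rangle$ together with $-\delta_{ij}\|\grad^h\varphi\|^2$; moving the latter to the left-hand side gives exactly (\ref{eq:pestov}). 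The main obstacle is not any single estimate but the disciplined bookkeeping: one must consistently exploit that frame vectors are horizontally parallel, keep the two roles of $G^i$ (as a flow-derivative and as $\langle\grad^h\varphi,v_i\rangle$) aligned, and verify that the three leftover terms pair off \emph{exactly} as claimed rather than merely up to lower-order corrections.
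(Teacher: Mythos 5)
Your proposal is correct and follows essentially the same route as the paper's proof: expand both divergences by the Leibniz rule (using that the semi-basic field $v_i$ is horizontally parallel), cancel the two copies of $(G^i\varphi)\,\dvg^h\grad^{v,j}\varphi$ via (\ref{eq:sym_div}), apply the curvature identity (\ref{eq:tensor}) with $w=v_i$, $u=\grad^{v,j}\varphi(f)$, and trade $\nabla^h_{v_i}\grad^{v,j}\varphi(f)$ for $\grad^{v,j}G^i\varphi(f)-\delta_{ij}\grad^h\varphi(f)$ --- your auxiliary identity is exactly the paper's closing computation of $2\langle\grad^h\varphi(f),\grad^{v,j}G^i\varphi(f)\rangle$, repackaged via (\ref{eq:sym_grad}). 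The only cosmetic difference is that you keep $\grad^{v,j}G^i\varphi$ intact rather than expanding everything in an orthonormal basis as the paper does, which slightly streamlines the bookkeeping but changes nothing of substance.
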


\begin{proof}
Let $e_1,\ldots,e_n$ be an orthonormal basis of $T_pM$ with $p=\pi^k(f)$. We have that $\grad^h\varphi(f)=\sum_{l=1}^n\langle\grad^h\varphi(f),e_l\rangle e_l$. Using this fact, the definition of $j$-th vertical divergence, and general properties of the divergence we have
\begin{align*}
\dvg^{v,j}Z^i(f)&=\sum_{l=1}^n\langle\nabla^{v,j}_{e_l}G^i\varphi(f)\grad^h\varphi(f),e_l\rangle\\
&=\sum_{l=1}^n G^i\varphi(f)\langle\nabla^{v,j}_{e_l}\grad^h\varphi(f),e_l\rangle+
e_l\big(\langle\grad^h\varphi(f),v_i\rangle\big)\langle\grad^h\varphi(f),e_l\rangle\\
&=G^i\varphi(f)\sum_{l=1}^n\langle\nabla^{v,j}_{e_l}\grad^h\varphi(f),e_l\rangle\\
&\mbox{\hspace{7mm}}+\sum_{l=1}^n\langle\grad^h\varphi(f),e_l\rangle\big(\langle
\nabla^{v,j}_{e_l}\grad^h\varphi(f),v_i\rangle+\langle\grad^h\varphi(f),\nabla^{v,j}_{e_l}v_i\rangle\big)
\end{align*}
\begin{align*}
\mbox{\hspace{5mm}}&=G^i\varphi(f)\sum_{l=1}^n\langle\nabla^{v,j}_{e_l}\grad^h\varphi(f),e_l\rangle\\
&\mbox{\hspace{7mm}}+\langle\nabla^{v,j}_{\sum_{l=1}^n\langle\grad^h\varphi(f),e_l\rangle e_l}\grad^h\varphi(f),v_i\rangle+\sum_{l=1}^n\delta_{ij}\langle\grad^h\varphi(f),e_l\rangle^2\\
&=G^i\varphi(f)\sum_{l=1}^n\langle\nabla^{v,j}_{e_l}\grad^h\varphi(f),e_l\rangle+
\langle\nabla^{v,j}_{\grad^h\varphi(f)}\grad^h\varphi(f),v_i\rangle+\delta_{ij}\|\grad^h\varphi(f)\|^2,
\end{align*}
where $v_i$ represents the semi-basic vector field defined as $f=(v_1,\ldots,v_k)\mapsto v_i$.

In the same way,
\begin{align*}
\dvg^hY^{j,i}(f)&=\dvg^h\big(\langle\grad^h\varphi(f),\grad^{v,j}\varphi(f)\rangle v_i\big)-\dvg^h\big(G^i\varphi(f)\grad^{v,j}\varphi(f)\big)\\
&=\langle\grad^h\varphi(f),\grad^{v,j}\varphi(f)\rangle\underbrace{\dvg^hv_i}_{=0}+\langle\grad^h\big(
\langle\grad^h\varphi(f),\grad^{v,j}\varphi(f)\rangle\big),v_i\rangle\\
&\mbox{\hspace{7mm}}-G^i\varphi(f)\dvg^h\grad^{v,j}\varphi(f)-\langle\grad^hG^i\varphi(f),\grad^{v,j}\varphi(f)\rangle\\
&=\frac{d}{dt}\Big\vert_{t=0}\langle\grad^h\varphi(f_{v_i}(t)),\grad^{v,j}\varphi(f_{v_i}(t))\rangle-G^i\varphi(f)\dvg^{v,j}\grad^h\varphi(f)\\
&\mbox{\hspace{7mm}}-\frac{d}{dt}\Big\vert_{t=0}\langle\grad^h\varphi(f_{\grad^{v,j}\varphi(f)}(t),(v_i)_{\grad^{v,j}\varphi(f)}(t)\rangle\\
&=\langle\nabla^h_{v_i}\grad^h\varphi(f),\grad^{v,j}\varphi(f)\rangle+\langle\grad^h\varphi(f),\nabla^h_{v_i}\grad^{v,j}\varphi(f)\rangle\\
&\mbox{\hspace{7mm}}-G^i\varphi(f)\sum_{l=1}^n\langle\nabla^{v,j}_{e_l}\grad^h\varphi(f),e_l\rangle-\langle
\nabla^h_{\grad^{v,j}\varphi(f)}\grad^h\varphi(f),v_i\rangle
\end{align*}
where we use equation (\ref{eq:sym_div}) in the third equality.

Hence, summing these two divergences and using equations (\ref{eq:sym_grad}) and (\ref{eq:tensor}) we have 
\begin{equation*}
\begin{split}
\dvg^{v,j}&Z^i(f)+\dvg^hY^{j,i}(f)=\langle\nabla^h_{v_i}\grad^h\varphi(f),\grad^{v,j}\varphi(f)\rangle+\langle\grad^h\varphi(f),\nabla^h_{v_i}\grad^{v,j}\varphi(f)\rangle\\
&\mbox{\hspace{7mm}}-\langle\nabla^h_{\grad^{v,j}\varphi(f)}\grad^h\varphi(f),v_i\rangle+
\langle\nabla^{v,j}_{\grad^h\varphi(f)}\grad^h\varphi(f),v_i\rangle+\delta_{ij}\|\grad^h\varphi(f)\|^2\\
&=\langle\nabla^h_{v_i}\grad^h\varphi(f),\grad^{v,j}\varphi(f)\rangle-\langle\nabla^h_{\grad^{v,j}\varphi(f)}\grad^h\varphi(f),
v_i\rangle\\
&\mbox{\hspace{7mm}}+2\langle\nabla^h_{v_i}\grad^{v,j}\varphi(f),\grad^h\varphi(f)\rangle+\delta_{ij}\|\grad^h\varphi(f)\|^2\\
&=\delta_{ij}\|\grad^h\varphi(f)\|^2+2\langle\nabla^h_{v_i}\grad^{v,j}\varphi(f),\grad^h\varphi(f)\rangle+\sum_{l=1}^k\langle R(\grad^{v,l}\varphi(f),v_l)v_i,\grad^{v,j}\varphi(f)\rangle.
\end{split}
\end{equation*}
Finally, we have
\begin{align*}
2\langle\grad^h\varphi(f)&,\grad^{v,j}G^i\varphi(f)\rangle=2\langle\grad^h\varphi(f),\grad^{v,j}(\langle\grad^h\varphi(f),v_i\rangle)\rangle\\
&=2\frac{d}{dt}\Big\vert_{t=0}\langle\grad^h\varphi(v_1,\ldots,v_j+t\grad^h\varphi(f),\ldots,v_k),v_i+\delta_{ij}t\grad^h\varphi(f)\rangle\\
&=2\langle\nabla^h_{v_i}\grad^{v,j}\varphi(f),\grad^h\varphi(f)\rangle+2\|\grad^h\varphi(f)\|^2
\end{align*}
Substituting this expression in the previous identity we obtain
\begin{align*}
\dvg^{v,j}&Z^i(f) +\dvg^hY^{j,i}(f)=-\delta_{i,j}\|\grad^h\varphi(f)\|^2\\
 &+\sum_{l=1}^k\langle R(\grad^{v,l}\varphi(f),v_l)v_i,\grad^{v,j}\varphi(f)\rangle+2\langle\grad^h\varphi(f),\grad^{v,j} G^i\varphi(f)\rangle.\qedhere
\end{align*}
\end{proof}

\subsection{Integrated version}

We now want to integrate equation (\ref{eq:pestov}) over the principal bundle $\bundle k M$ with respect to the measure $d\mu_p(f)=dvol(p)\,d\nu_p(f)$ where $d\nu_p$ is the measure on the fibre $F_p$ of $\bundle k M$ and $dvol$ is the measure on $M$. In order to do this, we first need to understand the behaviour of the horizontal divergence and of the generators of the frame flows under integration over the frame bundle.

\begin{lemma}\label{lem:integral_divh}
Let $X:\tkm k M\longrightarrow TM$ be a semi-basic vector field, then 
\begin{equation}\label{eq:integral_divh}
\int_{\bundle k M}\dvg^hX(f)d\mu(f)=\int_M\int_{F_p}\dvg^hX(f)\;dvol(p)d\nu_p(f)=0.
\end{equation}
\end{lemma}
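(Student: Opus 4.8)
The plan is to reduce the statement to the ordinary divergence theorem on the closed manifold $M$ by averaging $X$ over the fibres. Define the genuine vector field $\bar X$ on $M$ by $\bar X(p)=\int_{F_p}X(f)\,d\nu_p(f)\in T_pM$; this is well defined because $X$ is semi-basic, so $X(f)\in T_pM$ for every $f$ in the fibre $F_p=(\pi^k)^{-1}(p)$, and it is smooth because $X$ and the fibre measures vary smoothly with $p$. By Fubini (as already written in the statement), $\int_{\bundle k M}\dvg^h X(f)\,d\mu(f)=\int_M\big(\int_{F_p}\dvg^h X(f)\,d\nu_p(f)\big)dvol(p)$, so it suffices to prove the pointwise identity $\int_{F_p}\dvg^h X(f)\,d\nu_p(f)=\dvg_M\bar X(p)$ and then invoke $\int_M\dvg_M\bar X\,dvol=0$, which holds since $M$ is compact and without boundary.

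To establish the pointwise identity, fix $p$ and an orthonormal basis $e_1,\dots,e_n$ of $T_pM$. For each $i$ let $c_{e_i}$ be the geodesic with $c_{e_i}(0)=p$, $c'_{e_i}(0)=e_i$, and let $f\mapsto f_{e_i}(t)$ be the associated horizontal curve, i.e.\ the parallel transport of frames along $c_{e_i}$. The crucial observation is that this parallel transport is a measure-preserving diffeomorphism $F_p\to F_{c_{e_i}(t)}$: since parallel transport along $c_{e_i}$ is a linear isometry $T_pM\to T_{c_{e_i}(t)}M$, it carries orthonormal $k$-frames to orthonormal $k$-frames and intertwines the orthogonal group actions on the fibres $F_p\cong\fibre n k$, hence pushes $d\nu_p$ forward to $d\nu_{c_{e_i}(t)}$. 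Changing variables by this map therefore freezes the domain of fibre integration, giving $\bar X(c_{e_i}(t))=\int_{F_p}X(f_{e_i}(t))\,d\nu_p(f)$.

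Now I differentiate covariantly in $t$. Working in a parallel orthonormal frame along $c_{e_i}$, covariant differentiation becomes ordinary differentiation of the components, which commutes with the (now parameter-independent) integral over the compact fibre $F_p$; hence $\nabla^M_{e_i}\bar X(p)=\D t\big\vert_{t=0}\int_{F_p}X(f_{e_i}(t))\,d\nu_p(f)=\int_{F_p}\D t\big\vert_{t=0}X(f_{e_i}(t))\,d\nu_p(f)=\int_{F_p}\nabla^h_{e_i}X(f)\,d\nu_p(f)$, the last equality being the very definition of the horizontal covariant derivative. Pairing with $e_i$, summing over $i$, and exchanging the finite sum with the integral yields $\dvg_M\bar X(p)=\int_{F_p}\dvg^h X(f)\,d\nu_p(f)$, which is exactly the identity needed; combining this with the Fubini reduction and the divergence theorem on $M$ completes the proof.

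The step I expect to be the main obstacle is the measure-invariance of parallel transport between fibres: one must be sure that $d\nu_p$ is the natural $O(n)$-invariant volume on the Stiefel fibre $\fibre n k$, so that the linear isometry induced by parallel transport is genuinely measure-preserving, and that $\bar X$ and the integrand depend smoothly enough on $p$ to justify both Fubini and differentiation under the integral sign. Once parallel transport is known to identify the fibres isometrically, freezing the domain of integration and commuting $\D t\big\vert_{t=0}$ with $\int_{F_p}d\nu_p$ are routine, and the whole statement collapses to the divergence theorem on $M$.
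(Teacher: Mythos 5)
Your proposal is correct and is essentially the paper's own argument: both reduce the claim to the divergence theorem on the compact manifold $M$ by defining the averaged field $\bar X(p)=\int_{F_p}X(f)\,d\nu_p(f)$ and showing that fibre integration commutes with horizontal covariant differentiation, which rests on parallel transport identifying the fibres measure-preservingly. The only cosmetic difference is that the paper realizes your measure-preserving step by pulling the fibre integral back to the fixed model $\fibre n k$ via the chart $\psi_{f_0}$ and the unique left $O(n)$-invariant Haar measure (citing Theorem 8.1.8 of Feres), which is exactly the uniqueness-of-invariant-measure justification you identify as the main obstacle.
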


\begin{proof}
We first recall that $F_p\cong\fibre n k$. To prove the Lemma, it is suffices to show 
\begin{equation}\label{eq:1}
\int_{F_p}\langle\nabla^h_v X,v\rangle\;d\nu_p(f)=\langle\nabla_v\int_{F_p}X(f)\;d\nu_p(f),v\rangle.
\end{equation}
In fact, from the above it follows 
$$\int_{\fibre n k}\dvg^hX(f)\;d\nu_p(f)=\dvg\int_{\fibre n k}X(f)\;d\nu_p(f).$$
Then, due to compactness of $M$ we obtain (\ref{eq:integral_divh}).

Let us prove (\ref{eq:1}). We define $\varrho:\bundle n M\rightarrow\bundle k M$ with $\varrho((v_1,\ldots,v_n))=(v_1,\ldots,v_k)$. In particular, $\varrho:\tilde{F}_p\rightarrow F_p$ where $\tilde{F}_p\cong O(n)$ is the fibre of the principal bundle $\bundle n M$ with $O(n)$ right action.

By Theorem 8.1.8 in \cite{feres:98}, there exists a unique (up to scalar) left $O(n)$-invariant Haar measure on $\fibre n k$ that we obtain in the following way.

Fix $f_0\in\tilde{F}_p$ and let $g\in O(n)$. There is a canonical diffeomorphism $$\psi_{f_0}:\fibre n k\rightarrow F_p \mbox{ such that } g\cdot O(n-k)\mapsto\varrho(g\cdot f_0).$$ The pullback $\psi_{f_0}^{\ast}(\nu_p)=\theta_{\fibre n k}$ gives the measure on $\fibre n k$.

In what follows, we will drop the arguments of the measure whenever clear and we will also drop the subscript of the measure $\theta$.

We have
\begin{align*}
\int_{F_p}\langle\nabla^h_v X(f),v\rangle\;d\nu_p(f)=\int_{F_p}\langle\D t\Bigvert_{t=0}X(f_v(t)),v\rangle\;d\nu_p(f)
=\int_{F_p}\der t\Bigvert_{t=0}\langle X(f_v(t)),v\rangle\;d\nu_p(f).
\end{align*}
We have: $X(f_v(t))=X(\varrho(\tilde{f}_v(t)))=X(\varrho(g\cdot(f_0)_v(t)))=X(
\psi_{(f_0)_v(t)}(g))$ for $\tilde{f}=g\cdot f_0\in \tilde{F}_p$. 

Then,
\begin{align*}
\int_{F_p}\der t\Bigvert_{t=0}&\langle X(f_v(t)),v\rangle\;d\nu_p(f)=\int_{\psi_{f_0}\big(\fibre n k\big)}\der t\Bigvert_{t=0}\langle X(f_v(t)),\phi^t(v)\rangle\;d\nu_p(f)\\
&=\int_{\fibre n k}\der t\Bigvert_{t=0}\langle X(\psi_{(f_0)_v(t)}(g)),\phi^t(v)\rangle\;d(\psi^{\ast}_{f_0}\nu_p)(g)\\
&=\der t\Bigvert_{t=0}\int_{\fibre n k}\langle X(\psi_{(f_0)_v(t)}(g)),\phi^t(v)\rangle\;d\theta(g\cdot O(n-k))\\
&=\der t\Bigvert_{t=0}\langle\int_{\fibre n k}X(\psi_{(f_0)_v(t)}(g))\;d\theta\;,\phi^t(v)\;\rangle\\
&=\langle\D t\Bigvert_{t=0}\int_{\fibre n k}X(\psi_{f_0}(g))\;d\theta\;,v\;\rangle\\
&=\langle\D t\Bigvert_{t=0}\int_{F^k_p}X(f)\;d\nu_p(f)\;,v\rangle
\end{align*}
which concludes the proof.
\end{proof}

\begin{lemma}\label{lem:integral_flow}
Let $\varphi,\psi\in C^{\infty}(\tkm k M)$, then 
\begin{equation}\label{eq:integral_flow}
\int_{\bundle k M}\psi(f) G^i\varphi(f)\;d\mu=-\int_{\bundle k M}\varphi(f)G^i\psi(f)\;d\mu.
\end{equation}
\end{lemma}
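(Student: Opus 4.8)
The plan is to reduce equation (\ref{eq:integral_flow}) to the vanishing of $\int_{\bundle k M} G^i h\,d\mu$ for a single smooth function $h$, and then to realise $G^i$ as a horizontal divergence so that Lemma \ref{lem:integral_divh} applies directly.

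First, since $G^i h(f) = \der t\Bigvert_{t=0} h(F^i_t(f))$ is a first-order derivation, the Leibniz rule gives
$$G^i(\varphi\psi)(f) = \psi(f)\,G^i\varphi(f) + \varphi(f)\,G^i\psi(f).$$
Integrating this identity over $\bundle k M$ and rearranging shows that (\ref{eq:integral_flow}) is equivalent to $\int_{\bundle k M} G^i(\varphi\psi)(f)\,d\mu = 0$. Thus it suffices to prove
$$\int_{\bundle k M} G^i h(f)\,d\mu = 0 \qquad \text{for every } h\in C^{\infty}(\tkm k M),$$
and then apply this to $h = \varphi\psi$, which is again smooth on $\tkm k M$.

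To establish this, recall from (\ref{eq:flow_gradh}) that $G^i h(f) = \langle\grad^h h(f), v_i\rangle$, and introduce the semi-basic vector field $X(f) = h(f)\,v_i$, where $v_i$ denotes the semi-basic vector field $f = (v_1,\ldots,v_k)\mapsto v_i$. The Leibniz rule for the horizontal covariant derivative yields
$$\dvg^h X(f) = \langle\grad^h h(f), v_i\rangle + h(f)\,\dvg^h v_i.$$
As already noted in the proof of Theorem \ref{thm:pestov}, $\dvg^h v_i = 0$: indeed $\nabla^h_u v_i(f) = \D t\Bigvert_{t=0}(v_i)_u(t) = 0$, since $t\mapsto (v_i)_u(t)$ is the parallel transport of $v_i$ along $c_u$. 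Hence $\dvg^h X(f) = G^i h(f)$, and Lemma \ref{lem:integral_divh} gives $\int_{\bundle k M} G^i h(f)\,d\mu = \int_{\bundle k M}\dvg^h X(f)\,d\mu = 0$, as required.

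The only step that is not routine bookkeeping is the identification of $G^i$ with a horizontal divergence; once the correction term $h\,\dvg^h v_i$ is seen to vanish, the statement is an immediate consequence of Lemma \ref{lem:integral_divh}. Equivalently, one could observe that each frame flow $F^i_t$ preserves $\mu$ and differentiate $\int_{\bundle k M} h\circ F^i_t\,d\mu = \int_{\bundle k M} h\,d\mu$ at $t = 0$; this measure invariance is precisely what Lemma \ref{lem:integral_divh} encodes, so the divergence formulation is the most economical route here.
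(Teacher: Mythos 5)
Your proof is correct and follows essentially the same route as the paper: both reduce the claim via the Leibniz rule to the vanishing of $\int_{\bundle k M}G^i(\varphi\psi)(f)\,d\mu$, rewrite $G^i(\varphi\psi)(f)=\langle\grad^h(\varphi\psi)(f),v_i\rangle=\dvg^h\big((\varphi\psi)(f)\,v_i\big)-(\varphi\psi)(f)\dvg^hv_i$ with $\dvg^hv_i=0$, and conclude by Lemma \ref{lem:integral_divh}. Your explicit verification that $\dvg^hv_i=0$ (because $t\mapsto(v_i)_u(t)$ is parallel along $c_u$) merely spells out what the paper asserts in passing, so there is nothing to add.
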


\begin{proof}
Let $\varphi,\psi\in C^{\infty}(\tkm k M)$. Then
\begin{align*}
&\int_{\bundle k M}\psi(f)G^i\varphi(f)\;d\mu=\int_{\bundle k M}G^i(\psi\varphi)(f)-
\varphi(f)G^i\psi(f)\;d\mu\\
&=\int_{\bundle k M}\langle\grad^h(\psi\varphi)(f),v_i\rangle-\varphi(f)G^i\psi(f)\;d\mu\\
&=\underbrace{\int_{\bundle k M}\dvg^h((\psi\varphi)(f)v_i)\;d\mu}_{=0}-\int_{\bundle k M}(\psi\varphi)(f)\underbrace{\dvg^hv_i}_{=0}\;d\mu-
\int_{\bundle k M}\varphi(f)G^i\psi(f)\;d\mu\\
&=-\int_{\bundle k M}\varphi(f)G^i\psi(f)\;d\mu
\end{align*}
where the first integral vanishes by Lemma \ref{lem:integral_divh} and $v_i$ represents the semi-basic vector field $f=(v_1,\ldots,v_k)\mapsto v_i$.
\end{proof}

We are now ready to state the integrated version of (\ref{eq:pestov}).

\begin{thm}[Integrated Lifted Pestov's Identity]\label{thm:integral_pestov}
Let $\varphi\in C^{\infty}(\tkm k M)$. Then 
\begin{multline}\label{eq:integral_pestov}
 \delta_{ij}\|\grad^h\varphi\|^2_{L^2}-\int_{\bundle k M}\sum_{l=1}^k\langle R(\grad^{v,l}\varphi(f),v_l)v_i,\grad^{v,j}\varphi(f)\rangle\;d\mu=\\
=\int_{\bundle k M}\langle\grad^h\varphi(f),\grad^{v,j}G^i\varphi(f)\rangle+\langle\grad^hG^i\varphi(f),\grad^{v,j}\varphi(f)\rangle\;d\mu.
\end{multline}
where $L^2$ stands for the $L^2$-space on $\bundle k M$.
\end{thm}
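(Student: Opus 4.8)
The plan is to integrate the pointwise identity (\ref{eq:pestov}) over $\bundle k M$ against the measure $d\mu$ and then to eliminate the two divergence terms by integration by parts. The horizontal divergence is immediate: since $Y^{j,i}$ is a semi-basic vector field, Lemma \ref{lem:integral_divh} gives $\int_{\bundle k M}\dvg^h Y^{j,i}(f)\,d\mu=0$. Everything therefore hinges on the vertical term $\int_{\bundle k M}\dvg^{v,j}Z^i(f)\,d\mu$, which is the only delicate contribution and does \emph{not} vanish.

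To evaluate it I would avoid integrating $\dvg^{v,j}Z^i$ directly over the fibre and instead reduce it, step by step, to horizontal divergences of semi-basic fields. Writing $Z^i(f)=\big(G^i\varphi(f)\big)\grad^h\varphi(f)$ and applying the Leibniz rule for the vertical divergence gives
\begin{equation*}
\dvg^{v,j}Z^i(f)=\langle\grad^{v,j}G^i\varphi(f),\grad^h\varphi(f)\rangle+\big(G^i\varphi(f)\big)\,\dvg^{v,j}\grad^h\varphi(f).
\end{equation*}
The symmetry (\ref{eq:sym_div}) of Lemma \ref{lem:symmetries} now replaces $\dvg^{v,j}\grad^h\varphi$ by the horizontal divergence $\dvg^h\grad^{v,j}\varphi$, and a second Leibniz step, $\big(G^i\varphi\big)\dvg^h\grad^{v,j}\varphi=\dvg^h\big(G^i\varphi\,\grad^{v,j}\varphi\big)-\langle\grad^h G^i\varphi,\grad^{v,j}\varphi\rangle$, isolates the horizontal divergence of the semi-basic field $G^i\varphi\,\grad^{v,j}\varphi$. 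Integrating and discarding this last horizontal divergence via Lemma \ref{lem:integral_divh} yields
\begin{equation*}
\int_{\bundle k M}\dvg^{v,j}Z^i(f)\,d\mu=\int_{\bundle k M}\langle\grad^h\varphi(f),\grad^{v,j}G^i\varphi(f)\rangle-\langle\grad^h G^i\varphi(f),\grad^{v,j}\varphi(f)\rangle\,d\mu.
\end{equation*}

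It then remains to substitute this back into the integrated form of (\ref{eq:pestov}). The constant term $\delta_{ij}\|\grad^h\varphi(f)\|^2$ integrates to $\delta_{ij}\|\grad^h\varphi\|_{L^2}^2$, the curvature term is moved to the left-hand side, and one of the two copies of $\int\langle\grad^h\varphi,\grad^{v,j}G^i\varphi\rangle\,d\mu$ on the right of (\ref{eq:pestov}) cancels against the matching term produced above. What survives is exactly (\ref{eq:integral_pestov}). If preferred, the skew-adjointness of the frame-flow generators from Lemma \ref{lem:integral_flow} offers an alternative way to rearrange the flow terms, but the route above needs only Lemmas \ref{lem:integral_divh} and \ref{lem:symmetries}.

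The main obstacle is precisely the vertical divergence term. One might hope, by analogy with the horizontal case, that $\int_{\bundle k M}\dvg^{v,j}Z^i\,d\mu=0$, but this is false: the operator $\dvg^{v,j}$ differentiates along the curves $t\mapsto(v_1,\ldots,v_j+te_l,\ldots,v_k)$, which leave the constraint surface $\bundle k M$ inside $\tkm k M$, so $\dvg^{v,j}Z^i$ is not the intrinsic divergence along the fibre $F_p$ and there is no reason for its fibre integral to vanish. The trick that saves the computation is the chain Leibniz rule $\to$ symmetry (\ref{eq:sym_div}) $\to$ horizontal integration by parts, which converts every surviving divergence into a horizontal divergence of a semi-basic field, where Lemma \ref{lem:integral_divh} applies. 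Finally I would check the one routine hypothesis used throughout, namely that $Y^{j,i}$, $Z^i$ and $G^i\varphi\,\grad^{v,j}\varphi$ are all semi-basic; this is clear, as each is a scalar multiple of $v_i$, $\grad^h\varphi$ or $\grad^{v,j}\varphi$, all of which take values in $T_pM$.
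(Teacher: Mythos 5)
Your proposal is correct and follows essentially the same route as the paper: integrate the pointwise identity (\ref{eq:pestov}), kill $\dvg^h Y^{j,i}$ via Lemma \ref{lem:integral_divh}, and reduce $\int\dvg^{v,j}Z^i\,d\mu$ by the identical chain of Leibniz rule, the symmetry (\ref{eq:sym_div}), a second Leibniz step, and horizontal integration by parts, so that one copy of $\int\langle\grad^h\varphi,\grad^{v,j}G^i\varphi\rangle\,d\mu$ cancels. Your closing observation that the vertical divergence does not vanish under fibre integration, and that only Lemmas \ref{lem:integral_divh} and \ref{lem:symmetries} are needed, matches the paper's proof exactly.
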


\begin{proof}
Consider equation (\ref{eq:pestov}). Under integration over $\bundle k M$ the horizontal divergence vanishes and the remaining non-zero terms are
\begin{multline}\label{+++}
\int_{\bundle k M}\dvg^{v,j}Z^i(f)\;d\mu  +\delta_{ij}\|\grad^h\varphi(f)\|^2_{L^2}= 2\int_{\bundle k M}\langle\grad^h\varphi(f),\grad^{v,j} G^i\varphi(f)\rangle\;d\mu\\
+\int_{\bundle k M}\sum_{l=1}^k\langle R(\grad^{v,l}\varphi(f),v_lv_i,\grad^{v,j}\varphi(f)\rangle\;d\mu.
\end{multline}
Using equation (\ref{eq:sym_div}), general properties of the divergence and Lemma \ref{lem:integral_divh}, we have
\begin{align*}
\int_{\bundle k M}&\dvg^{v,j}  Z^i(f)\;d\mu =\int_{\bundle k M}\dvg^{v,j}\big(G^i\varphi(f)\grad^h\varphi(f)\big)\;d\mu\\
& =\int_{\bundle k M}\langle\grad^h\varphi(f),\grad^{v,j}G^i\varphi(f)\rangle+G^i\varphi(f)\dvg^{v,j}\grad^h\varphi(f)\;d\mu\\
& =\int_{\bundle k M}\langle\grad^h\varphi(f),\grad^{v,j}G^i\varphi(f)\rangle+G^i\varphi(f)\dvg^{h}\grad^{v,j}\varphi(f)\;d\mu\\
& =\int_{\bundle k M}\langle\grad^h\varphi(f),\grad^{v,j}G^i\varphi(f)\rangle-\langle\grad^hG^i\varphi(f),\grad^{v,j}\varphi(f)\rangle\;d\mu+\\
& \mbox{\hspace{20mm}}+\int_{\bundle k M}\dvg^h\big(G^i\varphi(f)\grad^{v,j}\varphi(f)\big)\;d\mu\\
&=\int_{\bundle k M}\langle\grad^h\varphi(f),\grad^{v,j}G^i\varphi(f)\rangle-\langle\grad^hG^i\varphi(f),\grad^{v,j}\varphi(f)\rangle\;d\mu.
\end{align*}
Substituting in (\ref{+++}) we have the theorem.
\end{proof}

\section{Restriction to functions on the frame bundle $\bundle k M$}

In  this section we restrict to smooth functions on $\bundle k M$, re-formulating the integrated version of the Lifted Pestov Identity for this class of functions. Moreover, we derive an identity for smooth functions invariant under one of the frame flows.

\subsection{General formula}

\begin{thm}\label{thm:integral_pestov_bundle}
Let $\varphi\in C^{\infty}(\bundle k M)$ and $f=(v_1,\ldots,v_k)\in\bundle k M$. Then 
\begin{multline}\label{eq:integral_pestov_bundle}
k\|\grad^h\varphi\|^2_{L^2}-\frac{k+1}{2}\sum_{i=1}^k\|G^i\varphi\|^2_{L^2}-\int_{\bundle k M}\sum_{i,j=1}^k\langle R(\grad_{\liealgebra k}^{v,j}\varphi(f),v_j)v_i,\grad_{\liealgebra k}^{v,i}\varphi(f)\rangle\;d\mu=\\
=\sum_{i=1}^k\int_{\bundle k M}\langle\grad^h\varphi(f),\grad_{\liealgebra k}^{v,i}G^i\varphi(f)\rangle+\langle\grad^hG^i\varphi(f),\grad_{\liealgebra k}^{v,i}\varphi(f)\rangle\;d\mu
\end{multline}
where $L^2$ stands for the $L^2$-space on $\bundle k M$.
\end{thm}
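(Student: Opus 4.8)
The theorem restricts the integrated identity (\ref{eq:integral_pestov}) from $C^\infty(\tkm k M)$ to $C^\infty(\bundle k M)$. The key difficulty is that for a function $\varphi$ defined only on the frame bundle, the unprojected vertical gradients $\grad^{v,i}\varphi(f)$ that appear in (\ref{eq:integral_pestov}) are not intrinsic objects: they depend on how $\varphi$ is extended off $\bundle k M$. The plan is therefore to start from (\ref{eq:integral_pestov}), sum over the diagonal $i=j$ from $1$ to $k$, and then systematically replace every occurrence of $\grad^{v,i}\varphi$ by its orthogonal projection $\grad_{\liealgebra k}^{v,i}\varphi$ using (\ref{eq:gradv_proj}), keeping careful track of the correction terms that arise from the components lying along the $v_j$ directions.

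\textbf{First step.} I would set $i=j$ in (\ref{eq:integral_pestov}) and sum over $i=1,\ldots,k$. The left-hand side immediately produces the term $k\|\grad^h\varphi\|^2_{L^2}$ (since $\sum_{i=1}^k\delta_{ii}=k$) together with the curvature double sum, while the right-hand side gives the two mixed gradient terms. The task is then to convert the bare vertical gradients into projected ones.

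\textbf{Second step — the correction terms.} For a function on $\bundle k M$, the component of $\grad^{v,i}\varphi$ in the fibre directions is fully determined. Concretely, from (\ref{eq:gradv_proj}) one has
\begin{equation*}
\grad^{v,i}\varphi(f)=\grad_{\liealgebra k}^{v,i}\varphi(f)+\tfrac12\sum_{j=1}^k\big(\langle\grad^{v,i}\varphi(f),v_j\rangle+\langle\grad^{v,j}\varphi(f),v_i\rangle\big)v_j.
\end{equation*}
The inner products $\langle\grad^{v,i}\varphi(f),v_j\rangle$ are governed by Lemma \ref{lem:G_gradv}: taking $\varphi$ itself (rather than $G^j\varphi$) and using $G^l\varphi(f)=\langle\grad^h\varphi(f),v_l\rangle$, these pairings can be expressed through flow derivatives and the flow generators. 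I expect that substituting these expressions into the curvature double sum and into the mixed terms on the right produces, after using the skew-symmetry of the matrix $(\langle\grad_{\liealgebra k}^{v,i}\varphi,v_j\rangle)_{ij}$ and the symmetries $R(x,y)z$ of the curvature tensor, exactly the extra term $-\tfrac{k+1}{2}\sum_{i=1}^k\|G^i\varphi\|^2_{L^2}$. The factor $\tfrac{k+1}{2}$ should emerge from combining the $\tfrac12$ in the projection formula with the contribution of the $k$ fibre directions plus the diagonal $G^i\varphi$ term already present in Lemma \ref{lem:G_gradv}.

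\textbf{The main obstacle.} The hard part will be the bookkeeping in the curvature term: replacing $\grad^{v,l}\varphi$ and $\grad^{v,j}\varphi$ by their projections introduces cross-terms of the form $\langle R(v_m,v_l)v_i,\grad_{\liealgebra k}^{v,j}\varphi\rangle$ and $\langle R(\grad_{\liealgebra k}^{v,l}\varphi,v_l)v_i,v_m\rangle$, and I must verify that all of these either cancel in pairs by the curvature symmetries and the skew-symmetry of the fibre-component matrix, or else collapse into the clean $\|G^i\varphi\|^2_{L^2}$ contribution. I would handle this by expanding one projection at a time, using $\langle R(x,y)z,w\rangle=\langle R(z,w)x,y\rangle$ and antisymmetry in the first and last pairs to organise the terms, and by invoking Lemma \ref{lem:integral_flow} to integrate by parts whenever a surviving term carries a flow derivative that can be shifted onto $\varphi$. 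The remaining passages — substituting Lemma \ref{lem:G_gradv} into the right-hand mixed terms and matching everything against the claimed form — are routine once the curvature cancellations are settled.
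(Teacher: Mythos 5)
Your plan reproduces the paper's proof essentially step for step: pass to a smooth extension $\tilde{\varphi}$ on $\tkm k M$, set $j=i$ in (\ref{eq:integral_pestov}) and sum over $i$, convert the bare vertical gradients to the projected ones via (\ref{eq:gradv_proj}), and extract the $\frac{k+1}{2}\sum_{i}\|G^i\varphi\|^2_{L^2}$ term from the mixed gradient terms on the right-hand side using Lemmas \ref{lem:G_gradv} and \ref{lem:integral_flow}. The one point that resolves more cleanly than you anticipate is the curvature double sum, which you flag as the main obstacle: there the replacement is exact with no surviving cross-terms, since the projection corrections carry coefficients symmetric in the relevant pair of indices contracted against slots of $R$ antisymmetric in that pair (this is (\ref{eq:tensor1})), so the entire $\frac{k+1}{2}$ contribution comes from the mixed terms, matching the cancellation branch of your hedge.
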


\begin{proof}
Let $\tilde{\varphi}$ be a smooth extension of $\varphi$ on $\tkm k M$. We consider equation (\ref{eq:integral_pestov}) and we set $j=i$. Summing over $i=1,\ldots,k$ we obtain 
\begin{multline}\label{-}
k\|\grad^h\tilde{\varphi}\|^2_{L^2}-\int_{\bundle k M}\sum_{i,l=1}^k\langle R(\grad^{v,l}\tilde{\varphi}(f),v_l)v_i,\grad^{v,i}\tilde{\varphi}(f)\rangle\;d\mu=\\
=\int_{\bundle k M}\sum_{i=1}^k\langle\grad^h\tilde{\varphi}(f),\grad^{v,i}G^i\tilde{\varphi}(f)\rangle+\langle\grad^hG^i\tilde{\varphi}(f),\grad^{v,i}\tilde{\varphi}(f)\rangle\;d\mu.
\end{multline}
We consider the RHS of the above equation and using equations (\ref{eq:gradv_proj}) and (\ref{eq:flow_gradh}) we obtain
\begin{multline}\label{*}
\int_{\bundle k M}\langle\grad^h\tilde{\varphi}(f), \grad^{v,i}G^i\tilde{\varphi}(f)\rangle\;d\mu=\int_{\bundle k M}\langle\grad^h\tilde{\varphi}(f),\grad_{\liealgebra k }^{v,i}G^i\tilde{\varphi}(f)\rangle\;d\mu\\
+\frac{1}{2}\sum_{j=1}^k\int_{\bundle k M}G^j\tilde{\varphi}(f)\langle\grad^{v,i}G^i\tilde{\varphi}(f),v_j\rangle+G^j\tilde{\varphi}(f)\langle\grad^{v,j}G^i\tilde{\varphi}(f),v_i\rangle\;d\mu,
\end{multline}
and
\begin{multline}\label{**}
\int_{\bundle k M}\langle\grad^{v,i}\tilde{\varphi}(f),\grad^h G^i\tilde{\varphi}(f)\rangle\;d\mu=\int_{\bundle k M}\langle\grad^{v,i}_{\liealgebra k}\tilde{\varphi}(f),\grad^h G^i\tilde{\varphi}(f)\rangle\;d\mu+\\
+\frac{1}{2}\sum_{j=1}^k\int_{\bundle k M}G^jG^i\tilde{\varphi}(f)\langle\grad^{v,j}\tilde{\varphi}(f),v_i\rangle+G^jG^i\langle\grad^{v,i}\tilde{\varphi}(f),v_j\rangle\;d\mu\\
=\int_{\bundle k M}\langle\grad^{v,i}_{\liealgebra k}\tilde{\varphi}(f),\grad^h G^i\tilde{\varphi}(f)\rangle\;d\mu+\frac{(k+1)}{2}\|G^i\tilde{\varphi}(f)\|^2_{L^2}\\
-\frac{1}{2}\sum_{j=1}^k\int_{\bundle k M}G^i\tilde{\varphi}(f)\langle\grad^{v,i}G^j\tilde{\varphi}(f),v_j\rangle+G^i\tilde{\varphi}(f)\langle\grad^{v,j}G^j\tilde{\varphi}(f),v_i\rangle\;d\mu,
\end{multline}
where we used Lemma \ref{lem:integral_flow} and \ref{lem:G_gradv} for the second equality.

Adding (\ref{*}) and (\ref{**}) and taking the sum over $i$, the RHS of (\ref{-}) becomes 
\begin{multline}\label{--}
\int_{\bundle k M}\sum_{i=1}^k\langle\grad^h\tilde{\varphi}(f),\grad_{\liealgebra k}^{v,i}G^i\tilde{\varphi}(f)\rangle+\langle\grad^h G^i\tilde{\varphi}(f),\grad_{\liealgebra k}^{v,i}\tilde{\varphi}(f)\rangle\;d\mu\\
+\frac{k+1}{2}\sum_{i=1}^k\|G^i\tilde{\varphi}\|^2_{L^2}.
\end{multline} 
Finally, using (\ref{eq:gradv_proj}) and the antisymmetry of $R$, we have
\begin{equation}\label{eq:tensor1}
\sum_{i,j=1}^k\langle R(\grad^{v,j}\varphi(f),v_j)v_i,\grad^{v,i}\varphi(f)\rangle=\sum_{i,j=1}^k\langle R(\grad^{v,j}_{\liealgebra k}\varphi(f),v_j)v_i,\grad^{v,i}_{\liealgebra k}\varphi(f)\rangle.
\end{equation}
Substituting (\ref{--}) and (\ref{eq:tensor1}) in (\ref{-}) proves the theorem.
\end{proof}

\subsection{Consequence on invariant functions}

To conclude this section we present a new identity derived from (\ref{eq:integral_pestov_bundle}) assuming the function $\varphi$ to be invariant under one of the frame flows and choosing $k=n$. This identity generalises for $C^{\infty}$-functions on $\bundle k M$ for any $1\leq k< n$, with additional terms occurring and its proof involves a rather heavy calculation, which is not used in this article.
\begin{cor}\label{cor:invariance}
Let $\varphi\in C^{\infty}(\bundle n M)$ and assume that it is invariant under the $i$-th frame flow, i.e., $G^i\varphi(f)=0$ for all $f\in\bundle n M$, and let $f=(v_1,\ldots,v_n)\in\bundle n M$. Then
\begin{equation}\label{eq:invariance}
\frac{1}{2}\sum_{j=1,j\neq i}^n\|G^j\varphi\|^2_{L^2}
=\sum_{j=1}^n\int_{\bundle n M}\langle R(w_j,v_j)w_i,v_i\rangle\;d\mu,
\end{equation}
where $w_i=\grad^{v,i}_{\liealgebra n}\varphi(f)$ for all $i=1,\ldots,k$ and $L^2$ stands for the $L^2$-space on $\bundle n M$.
\end{cor}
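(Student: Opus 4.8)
The plan is to specialise the Integrated Lifted Pestov Identity (\ref{eq:integral_pestov}) to the invariant index and restrict it to the frame bundle, exploiting that for $k=n$ the vectors of a frame form a full orthonormal basis. Concretely, I would fix a smooth extension $\tilde\varphi\in C^\infty(\tkm n M)$ of $\varphi$, take (\ref{eq:integral_pestov}) with $k=n$ and with the free index $j$ set equal to the invariant index $i$, and \emph{not} sum over $i$. The key structural observation at $k=n$ is that $\{v_1,\dots,v_n\}$ is an orthonormal basis of $T_pM$, so by (\ref{eq:flow_gradh}) we have $\grad^h\varphi(f)=\sum_{j=1}^n G^j\varphi(f)\,v_j$ and hence $\|\grad^h\varphi(f)\|^2=\sum_{j=1}^n (G^j\varphi(f))^2$; since $G^i\varphi\equiv0$ this turns the leading term $\delta_{ii}\|\grad^h\varphi\|^2_{L^2}$ into $\sum_{j\neq i}\|G^j\varphi\|^2_{L^2}$, which is the shape of the left-hand side of (\ref{eq:invariance}).

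Next I would use the invariance to kill terms. Both the $i$-th frame flow and the horizontal curves $f\mapsto f_u(t)$ preserve $\bundle n M$, so all \emph{intrinsic} derivatives of the identically vanishing function $G^i\varphi$ are zero: $\grad^h G^i\varphi=0$, $\grad_{\liealgebra n}^{v,i}G^i\varphi=0$ and $G^jG^i\varphi=0$ for every $j$. I would then pass from the unprojected vertical gradients $\grad^{v,\cdot}$ to the projected ones $w_l=\grad_{\liealgebra n}^{v,l}\varphi$ by the decomposition (\ref{eq:gradv_proj}) --- this is exactly the manipulation (\ref{*})--(\ref{**}) carried out in the proof of Theorem \ref{thm:integral_pestov_bundle}, but now for the single fixed index $i$. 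The projected contributions to the right-hand side vanish by the three facts just noted, the term $\tfrac{n+1}{2}\|G^i\varphi\|^2_{L^2}$ vanishes, and every leftover term carrying an explicit factor $G^i\varphi$ vanishes; what survives are the cross terms produced by (\ref{*}), namely $\tfrac12\sum_j\int_{\bundle n M} G^j\tilde\varphi\big(\langle\grad^{v,i}G^i\tilde\varphi,v_j\rangle+\langle\grad^{v,j}G^i\tilde\varphi,v_i\rangle\big)\,d\mu$.

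The heart of the computation is to evaluate these cross terms. I would rewrite each inner product with Lemma \ref{lem:G_gradv}, e.g. $\langle\grad^{v,i}G^i\tilde\varphi,v_j\rangle=G^i\langle\grad^{v,i}\tilde\varphi,v_j\rangle+G^j\varphi$, which immediately separates off $\tfrac12\sum_j\|G^j\varphi\|^2_{L^2}=\tfrac12\sum_{j\neq i}\|G^j\varphi\|^2_{L^2}$. The remaining pieces are integrated by parts in the flow direction via Lemma \ref{lem:integral_flow}, turning them into integrals of the second flow-derivative $G^iG^j\tilde\varphi$, which on $\bundle n M$ equals $\sum_l\langle R(\grad^{v,l}\tilde\varphi,v_l)v_i,v_j\rangle$ by (\ref{eq:sym_flow}) together with $G^jG^i\varphi=0$. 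Contracting against the frame (again using that $\{v_j\}$ is a basis) collapses these into a curvature integral; replacing the unprojected $\grad^{v,l}\tilde\varphi$ by $w_l$ costs nothing, because the correction in (\ref{eq:gradv_proj}) has symmetric coefficients while $R(\,\cdot\,,v_l)$ is antisymmetric in the contracted indices, exactly as in (\ref{eq:tensor1}). Combining with the leading norm term and using the symmetries of $R$ together with the skew-symmetry $\langle w_a,v_b\rangle=-\langle w_b,v_a\rangle$ of the matrix in $\mathfrak{o}(n)$, the curvature integral takes the form on the right-hand side of (\ref{eq:invariance}).

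The main obstacle is the curvature bookkeeping. In Theorem \ref{thm:integral_pestov_bundle} the projection-correction curvature terms disappear only after summing over the diagonal index, by the antisymmetry of $R$ recorded in (\ref{eq:tensor1}); here $i$ is fixed, so these corrections do not cancel on their own and must instead be amalgamated with the curvature generated by the flow-commutator cross terms before the symmetries of $R$ and the relation $\langle w_a,v_b\rangle=-\langle w_b,v_a\rangle$ can be applied. Carrying the factor $\tfrac12$ and the sign convention for $R$ correctly through the several integrations by parts is the delicate and error-prone part --- this is the ``rather heavy calculation'' alluded to after the statement.
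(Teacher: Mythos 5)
Your proposal is correct, and it reaches (\ref{eq:invariance}) by a genuinely different organization of the same toolkit. The paper never returns to the unprojected identity (\ref{eq:integral_pestov}): its proof starts from Theorem \ref{thm:integral_pestov_bundle} with $k=n$ --- the projected identity \emph{already summed} over the diagonal index --- rewrites the right-hand side via (\ref{eq:gradv_proj_inner}), Lemma \ref{lem:G_gradv}, Lemma \ref{lem:integral_flow} and (\ref{eq:sym_flow}) into $\tfrac{n-2}{2}\sum_{i\geq 2}\|G^i\tvarphi\|^2_{L^2}-\int_{\bundle n M}\sum_{i\geq 2}\sum_l\langle R(w_l,v_l)v_i,w_i\rangle\,d\mu$, and then extracts the fixed-index curvature term by subtracting this partial double sum from the full one on the left, the coefficient $\tfrac12$ emerging as $n-\tfrac{n+1}{2}-\tfrac{n-2}{2}$. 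You instead fix $j=i$ at the invariant index in (\ref{eq:integral_pestov}), never sum, and use the invariance \emph{first}: since the frame flows and the horizontal curves $f_u(t)$ preserve $\bundle n M$, all intrinsic derivatives of $G^i\tvarphi$ vanish on the bundle ($\grad^h G^i\tvarphi=0$, $\grad^{v,i}_{\liealgebra n}G^i\tvarphi=0$, $G^jG^i\tvarphi=0$), so the whole analogue of (\ref{**}) and the $\tfrac{n+1}{2}\|G^i\tvarphi\|^2_{L^2}$-term die outright and only the symmetric correction coefficients $\tfrac12\big(\langle\grad^{v,i}\tvarphi,v_j\rangle+\langle\grad^{v,j}\tvarphi,v_i\rangle\big)$ from (\ref{eq:gradv_proj}) survive; your key cancellation is sound, because after Lemma \ref{lem:G_gradv} (which splits off $\tfrac12\sum_{j\neq i}\|G^j\varphi\|^2_{L^2}$), integration by parts via Lemma \ref{lem:integral_flow} and the commutator (\ref{eq:sym_flow}) produce curvature terms carrying exactly those same symmetric coefficients, which cancel identically against the ones obtained by expanding $\grad^{v,i}\tvarphi=w_i+\sum_j(\cdots)v_j$ in the curvature integral on the left, after which the replacement $\grad^{v,l}\tvarphi\mapsto w_l$ in the remaining slot is free by the (\ref{eq:tensor1})-type symmetric-versus-antisymmetric argument. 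What each route buys: yours bypasses Theorem \ref{thm:integral_pestov_bundle} entirely and makes transparent why no sum over $i$ is needed, at the price of redoing the projection bookkeeping for one index; the paper's reuses the already-proven projected identity, at the price of carrying the diagonal sum and recovering the fixed index only by a final subtraction of curvature sums. One bookkeeping remark validating your caution about signs: both your computation and the paper's own (see (\ref{eq:1.3})) land on $\sum_l\int\langle R(w_l,v_l)v_i,w_i\rangle\,d\mu$, which is also the form used when the corollary is applied in the proof of Theorem \ref{thm:invariance_grass_intro}, whereas the displayed statement (\ref{eq:invariance}) has the last two slots swapped --- a discrepancy internal to the paper, not a flaw in your argument.
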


\begin{proof}
We prove the theorem for $i=1$. The other cases follow in the same way.

Let $f=(v_1,\ldots,v_n)\in\bundle n M$ and let $\tilde{\varphi}$ be a smooth extension of $\varphi$ on $\tkm n M$. 

We consider equation (\ref{eq:integral_pestov_bundle}) for $k=n$ and we aim to rewrite the horizontal gradient and its RHS in terms of $L^2$-norms of the generators of the frame flows and in terms of the Riemannian curvature tensor.

First of all, we observe that, in the case $k=n$, we have 
\begin{equation}\label{eq:1.0}
\|\grad^h\tvarphi\|^2_{L^2}=\sum_{i=1}^n\|G^i\tvarphi\|^2_{L^2}
\end{equation} 
as $v_1,\ldots,v_n$ is an orthonormal basis of $T_pM$ and $\grad^h\tvarphi(f)=\sum_{i=1}^nG^i\tvarphi(f) v_i$. 

We now look at the RHS of (\ref{eq:integral_pestov_bundle}).
From (\ref{eq:gradv_proj}) we derive 
\begin{equation}\label{eq:gradv_proj_inner}
\langle
w_i,v_j\rangle=\frac{1}{2}\big(\langle\grad^{v,i}\tvarphi(f),v_j\rangle-
\langle\grad^{v,j}\tvarphi(f),v_i\rangle\big).
\end{equation}
Using this equation, Lemma \ref{lem:G_gradv} and the fact that $\grad^h\tvarphi(f)=\sum_{i=1}^nG^i\tvarphi(f) v_i$, we have
\begin{align}\label{eq:1.1}
\sum_{i=1}^n\int_{\bundle n M}\langle&\grad^h\tvarphi(f),\grad_{\liealgebra n}^{v,i}G^i\tvarphi(f)\rangle\;d\mu=\sum_{i=1}^n\int_{\bundle n M}\sum_{j=2,j\neq i}^nG^j\tvarphi(f)\langle\grad^{v,i}_{\liealgebra n}G^i\tvarphi(f),v_j\rangle\;d\mu\notag\\
&=\sum_{i,j=2,j\neq i}^n\int_{\bundle n M}\frac{1}{2}G^j\tvarphi(f)\Big[
\langle\grad^{v,i}G^i\tvarphi(f),v_j\rangle-\langle\grad^{v,j}
G^i\tvarphi(f),v_i\rangle\Big]\;d\mu\notag\\
&=\sum_{i,j=2,j\neq i}^n\int_{\bundle n M}\frac{1}{2}G^j\tvarphi(f)\Big[G^i\langle\grad^{v,i}\tvarphi(f),v_j\rangle-G^i\langle\grad^{v,j}\tvarphi(f),v_i\rangle+G^j\tvarphi(f)\Big]\;d\mu\notag\\
&\mbox{\hspace{10mm}}=\sum_{i,j=2,j\neq i}^n\int_{\bundle n M}-G^iG^j\tvarphi(f)\langle
w_i,v_j\rangle\;d\mu+\frac{n-2}{2}\sum_{i=2}^n\|G^i\tvarphi\|^2_{L^2},
\end{align}
and
\begin{equation}\label{eq:1.2}
\begin{split}
\sum_{i=2}^n\int_{\bundle n M}\langle\grad^hG^i\varphi(f),\grad_{\liealgebra n}^{v,i}\varphi(f)\rangle\;d\mu&=\sum_{i=2}^n\int_{\bundle n M}G^1G^i\tvarphi(f)\langle v_1,
w_i\rangle\;d\mu\\
&\mbox{\hspace{5mm}}+\sum_{i,j=2,j\neq i}^n\int_{\bundle n M}G^jG^i\tvarphi(f)\langle
w_i,v_j\rangle\;d\mu.
\end{split}
\end{equation}
Summing (\ref{eq:1.1}) and (\ref{eq:1.2}), using (\ref{eq:sym_flow}) and the skew-symmetry of the matrix $(\langle w_i,v_j\rangle)_{i,j}$, we have
\begin{align}\label{eq:1.3}
\sum_{i=1}^n&\int_{\bundle k M}\langle\grad^h\varphi(f),\grad_{\liealgebra n}^{v,i}G^i\varphi(f)\rangle+\langle\grad^hG^i\varphi(f),\grad_{\liealgebra n}^{v,i}\varphi(f)\rangle\;d\mu=\notag\\
&=\frac{n-2}{2}\sum_{i=2}^n\|G^i\tilde{\varphi}\|^2_{L^2}+\sum_{i=2}^n\int_{\bundle n M}G^1G^i\tvarphi(f)\langle v_1
w_i\rangle\;d\mu+\notag\\
& \mbox{\hspace{27mm}}+\sum_{i,j=2,i\neq j}^n\int_{\bundle n M}(G^jG^i-G^iG^j)\tvarphi(f)\langle v_j,
w_i\rangle\;d\mu\notag\\
&=\frac{n-2}{2}\sum_{i=2}^n\|G^i\tilde{\varphi}\|^2_{L^2}-\int_{\bundle n M}\sum_{i=2}^n\sum_{l=1}^n\langle R(w_l,v_l)v_i,w_i\rangle\;d\mu.
\end{align}
Substituting (\ref{eq:1.0}) and (\ref{eq:1.3}) in (\ref{eq:integral_pestov_bundle}) concludes the proof.
\end{proof}

\section{From frame bundles to oriented Grassmannians}

In this section we apply the theory developed previously to functions defined on the oriented $k$-th Grassmannian of $M$. In what follows, we present a dictionary between the world of frame bundles and frame flows and the world of oriented Grassmannian and parallel transports. This allows us to prove an invariance property of $C^{\infty}$-functions on the oriented Grassmannian, which is presented at the end of the section.

\subsection{Oriented Grassmannians and intrinsic parallel transports} 

The oriented $k$-th Grassmannian of $M$, $\grass k M$ for $1\leq k\leq n=\dim M$, is the union of all $k$-planes of $T_pM$ for all $p\in M$ together with an intrinsic orientation,i.e.,
\begin{equation*}
\grass k M=\bigunion_{p\in M}\big\{A_{or}\;\big\vert\; A\subset T_pM, \dim A=k\big\},
\end{equation*} 
where $A_{or}$ is a subspace of $T_pM$ with an additional choice of intrinsic orientation.
 
This is a $2$-fold covering of the non-oriented $k$-th Grassmannian of $M$ and, in the special case of $k=1$, $\grass 1 M=SM$, the unit tangent bundle of $M$.

There is a canonical projection 
\begin{equation}\label{eq:proj_frametograss}
\begin{array}{c}
\tilde{\pi}:\bundle n M\longrightarrow\grass k M,\\
\mbox{\vspace{5mm}}f=(v_1,\ldots,v_n)\mapsto\big(span\{v_1,\ldots,v_k\},(v_1,\ldots,v_k)\big),
\end{array}
\end{equation}
where $(v_1,\ldots,v_k)$ stands for intrinsic orientation.

Therefore, any function $\varphi\in C^{\infty}(\grass k M)$ can be extended to a function $\phi\in C^{\infty}(\bundle n M)$ by setting $\phi=\varphi\circ\tilde{\pi}$. The function $\phi$ has the following two important properties. 

Firstly, $\phi$ is invariant under the action of matrices of the form 
$\left(
\begin{array}{cc}
SO(k) & 0\\
0& SO(n-k)\\
\end{array}
\right)$
since $\varphi$ is invariant under the action of $SO(k)$.

Secondly, its vertical gradients satisfy the following.
\begin{lemma}\label{lem:gradv_span}
Let $\varphi\in C^{\infty}(\grass k M)$ and $\phi=\varphi\circ\tilde{\pi}\in C^{\infty}(\bundle n M)$. Then, 
\begin{itemize}
\item [(i)] $\grad^{v,i}_{\liealgebra n}\phi(f)\in span\{v_1,\ldots,v_k\}$ if $i\geq k+1$;
\item [(ii)]  $\grad^{v,i}_{\liealgebra n}\phi(f)\in span\{v_{k+1},\ldots,v_n\}$ if $i=1,\ldots k$.
\end{itemize}
\end{lemma}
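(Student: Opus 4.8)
The goal is to identify where the $O(n)$-projected vertical gradients of $\phi=\varphi\circ\tilde\pi$ must live. The key structural fact I would exploit is that $\varphi$ depends only on the oriented $k$-plane $\mathrm{span}\{v_1,\dots,v_k\}$, hence $\phi$ is invariant under the block group $\mathrm{SO}(k)\times\mathrm{SO}(n-k)$, and moreover $\phi$ does not change at all under any deformation of the frame $f=(v_1,\dots,v_n)$ that keeps the first $k$ vectors spanning the same oriented subspace. The strategy is to translate each claim into a statement about which infinitesimal vertical variations $t\mapsto(v_1,\dots,v_i+tu,\dots,v_n)$ leave $\phi$ unchanged; by the defining relation $\langle\mathrm{grad}^{v,i}\phi(f),u\rangle=\frac{d}{dt}\big\vert_{t=0}\phi(\dots,v_i+tu,\dots)$, directions $u$ along which $\phi$ is infinitesimally constant are orthogonal to $\mathrm{grad}^{v,i}\phi(f)$, and then I track how the $\mathfrak o(n)$-projection in \eqref{eq:gradv_proj} moves things.

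First I would treat (i). Fix $i\geq k+1$. A variation $v_i\mapsto v_i+tu$ with $u\in\mathrm{span}\{v_{k+1},\dots,v_n\}$ alters only vectors among the \emph{last} $n-k$, so $\mathrm{span}\{v_1,\dots,v_k\}$ and its orientation are untouched, whence $\phi$ is constant along it; this shows $\langle\mathrm{grad}^{v,i}\phi(f),u\rangle=0$ for every such $u$, i.e. $\mathrm{grad}^{v,i}\phi(f)$ is orthogonal to $\mathrm{span}\{v_{k+1},\dots,v_n\}$ and therefore lies in $\mathrm{span}\{v_1,\dots,v_k\}$. The content of (i) is that the \emph{projected} gradient $\mathrm{grad}^{v,i}_{\liealgebra n}\phi(f)$ lies in the same span, so I must check that the correction term in \eqref{eq:gradv_proj}, namely $-\tfrac12\sum_{j}\big(\langle\mathrm{grad}^{v,i}\phi,v_j\rangle+\langle\mathrm{grad}^{v,j}\phi,v_i\rangle\big)v_j$, only contributes vectors $v_j$ with $j\le k$. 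For $j\ge k+1$ the first inner product vanishes by what was just shown; I would argue the second, $\langle\mathrm{grad}^{v,j}\phi,v_i\rangle$ with $j\ge k+1$ and $i\ge k+1$, vanishes as well, which is exactly the statement that varying $v_j$ in the direction $v_i$ (both indices $\ge k+1$) leaves the span of the first $k$ vectors fixed. Hence every surviving $v_j$ in the correction has $j\le k$, and both pieces lie in $\mathrm{span}\{v_1,\dots,v_k\}$.

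For (ii), fix $i\le k$. Here I use $\mathrm{SO}(k)$-invariance: an infinitesimal rotation inside the first block, $v_i\mapsto v_i+tv_j$ with $j\le k$, $j\neq i$ (an $\mathfrak{so}(k)$ direction), preserves the oriented $k$-plane, so $\phi$ is constant along it and $\langle\mathrm{grad}^{v,i}\phi(f),v_j\rangle=0$ for $j\le k$, $j\ne i$. The diagonal direction $v_i\mapsto v_i+tv_i$ merely rescales and is killed by the symmetrization in \eqref{eq:gradv_proj}, so after projection the $v_1,\dots,v_k$ components are removed and $\mathrm{grad}^{v,i}_{\liealgebra n}\phi(f)\in\mathrm{span}\{v_{k+1},\dots,v_n\}$ follows. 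I would verify directly from \eqref{eq:gradv_proj} that for each $j\le k$ the coefficient of $v_j$ in the projected gradient vanishes, using the just-established orthogonality together with the block-invariance to handle the cross terms $\langle\mathrm{grad}^{v,j}\phi,v_i\rangle$.

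\textbf{Main obstacle.} The routine part is the span statement for the \emph{unprojected} gradients; the real care is needed in the correction term of \eqref{eq:gradv_proj}, where one must confirm that the symmetrized off-block inner products $\langle\mathrm{grad}^{v,i}\phi,v_j\rangle+\langle\mathrm{grad}^{v,j}\phi,v_i\rangle$ behave correctly for the mixed index ranges — in particular that nothing leaks from the first block into the last block or vice versa after projection. This is precisely where the two separate invariances ($\mathrm{SO}(k)$ for the plane, and rigidity of the orthogonal complement for $i\ge k+1$) must be combined, and it is the step I would write out most carefully.
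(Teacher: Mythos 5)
Your overall strategy is the same as the paper's: reduce both parts to the vanishing of the inner products $\langle\grad^{v,i}_{\liealgebra n}\phi(f),v_j\rangle$ via the projection formula (\ref{eq:gradv_proj}), and identify the variations of the frame that leave the oriented $k$-plane fixed (shears among the last $n-k$ vectors for (i), shears/rotations within the first $k$ for (ii)). That geometric identification is correct. The genuine gap is that $\phi=\varphi\circ\tilde{\pi}$ is defined only on $\bundle n M$, while the curves $t\mapsto(v_1,\ldots,v_i+tu,\ldots,v_n)$ you differentiate along leave the frame bundle for $t\neq 0$; so the quantity $\phi(\ldots,v_i+tu,\ldots)$ and hence the unprojected gradients $\grad^{v,i}\phi(f)$, on which all your intermediate claims rest, are simply not defined. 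They make sense only after choosing an extension $\tilde{\phi}\in C^{\infty}(\tkm n M)$, and they genuinely depend on that choice: two extensions differ by a function vanishing on $\bundle n M$, whose gradient at $f$ is normal to $T_f\bundle n M$, i.e. it shifts $\grad^{v,i}\tilde{\phi}(f)$ by $\sum_j S_{ij}v_j$ with $(S_{ij})$ an arbitrary symmetric matrix. Thus a statement like ``$\grad^{v,i}\phi(f)\perp\mathrm{span}\{v_{k+1},\ldots,v_n\}$'' is not well-posed as you use it; only the skew combinations $\langle\grad^{v,i}\tilde{\phi}(f),v_j\rangle-\langle\grad^{v,j}\tilde{\phi}(f),v_i\rangle$, equivalently the components $\langle\grad^{v,i}_{\liealgebra n}\phi(f),v_j\rangle$, are extension-independent.

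This is exactly where the paper's proof invests its effort: it builds a concrete extension $\tilde{\phi}=H(h(w_1,\ldots,w_n))\cdot(\phi\circ\psi)(w_1,\ldots,w_n)$, with $\psi$ the Gram--Schmidt map and $H\circ h$ a cutoff in the Gram determinant, and checks that this $\tilde{\phi}$ is literally constant along your straight-line variations: a shear $w_l\mapsto w_l+tw_j$ preserves the Gram determinant, and it preserves $\phi\circ\psi$ when $l\geq k+1$ (only later Gram--Schmidt vectors change) and when $l,j\leq k$ (the oriented span of the first $k$ vectors is unchanged). With that extension fixed, your two-step bookkeeping (unprojected gradient plus correction term) goes through. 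Alternatively, you could repair the argument without any extension by differentiating only along curves that stay in $\bundle n M$: the fibre rotation in the $(v_i,v_j)$-coordinate plane has initial velocity $(0,\ldots,v_j,\ldots,-v_i,\ldots)$, and pairing with the intrinsic gradient of $\phi$ gives $\langle\grad^{v,i}_{\liealgebra n}\phi(f),v_j\rangle-\langle\grad^{v,j}_{\liealgebra n}\phi(f),v_i\rangle=2\langle\grad^{v,i}_{\liealgebra n}\phi(f),v_j\rangle$ by skew-symmetry, so invariance of $\tilde{\pi}$ under these rotations (for $i,j\geq k+1$, respectively $i,j\leq k$) yields both parts directly. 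As written, however, your key vanishing claims are unjustified until one of these two repairs is made.
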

\begin{proof}
We prove  $(i)$ and $(ii)$ showing that $\langle\grad^{v,i}_{\liealgebra n}\phi(f),v_j\rangle=0$ for $i,j\geq k+1$ and $\langle\grad^{v,i}_{\liealgebra n}\phi(f),v_j\rangle=0$ for $i,j\leq k$.

Let $\tilde{\phi}$ be a smooth extension of $\phi$ on $\tkm n M$ constructed as follows.

We define the functions $h:\tkm n M\rightarrow [0,\infty]$ such that $h(w_1,\ldots,w_n)=\det(\langle w_i,w_j\rangle)_{ij}$ and $\psi:\{(w_1,\ldots,w_n)\mbox{ lin. indep.}\}\rightarrow\bundle n M$ to be the Gram-Schmidt process. 

Furthermore, let $H:[0,\infty]\rightarrow [0,1]$ be a cut off function such that $H(x)=0$ for $x\leq\frac{1}{2}$, $H(x)=1$ for $x\geq\frac{3}{4}$ and $0\leq H(x)\leq 1$ for $\frac{1}{2}\leq x\leq\frac{3}{4}$. Then,
$$\tilde{\phi}(w_1,\ldots,w_n)=\left\{
\begin{array}{lr}
H\big(h(w_1,\ldots,w_n)\big)\cdot(\phi\circ\psi)(w_1,\ldots,w_n) & \qquad\{w_1,\ldots,w_n\}\mbox{ lin. indep.}\\
\mbox{\vspace{3mm}}0 & \mbox{\vspace{3mm}otherwise}
\end{array}\right.$$
Note that $\tilde{\phi}(w_1,\ldots, w_l+tw_j,\ldots,w_n)=\tilde{\phi}(w_1,\ldots,w_n)$ for all $l\geq k+1$.

This implies that, for $i,j\geq k+1$, we have
\begin{align*}
\langle\grad&^{v,i}_{\liealgebra n}\phi(f),v_j\rangle=\frac{1}{2}\langle\grad^{v,i}\tilde{\phi}(f),v_j\rangle-\frac{1}{2}\langle\grad^{v,j}\tilde{\phi}(f),v_i\rangle\\
&=\frac{1}{2}\der t\Bigvert_{t=0}\Big(\tilde{\phi}(v_1,\ldots,v_k,\ldots,v_i+tv_j,\ldots,v_n)-\tilde{\phi}(v_1,\ldots,v_k,\ldots,v_j+tv_i,\ldots,v_n)\Big)=0,
\end{align*}
which proves $(i)$. 

Part $(ii)$ follows from the fact that $\tilde{\phi}$ depends only on the plane spanned by the first $k$ vectors.
\end{proof}

Next, we define the notion of \textit{intrinsic} parallel transport of oriented $k$-planes of $\grass k M$.

\begin{defin}
Let $(A_{or})_v(t)$ denote the parallel transport of the oriented $k$-plane $A_{or}\in\grass k M$ along a curve $c_v$ on $M$ with $c'_v(0)=v$.  We say that the parallel transport is \textit{intrinsic} if the vector $v$ belongs to $A_{or}$ and we call it \textit{non-intrinsic} otherwise.
\end{defin}

This definition gives the following link between frame flows on $\phi=\varphi\circ\tilde{\pi}$ and intrinsic parallel transports applied to $\varphi$. 

Let $f=(v_1,\ldots, v_n)\in\bundle k M$ and $A_{or}\in\grass k M$ such that $A_{or}=\tilde{\pi}(f)$, then 
\begin{equation}\label{eq:flow_transport}
G^i\phi(f)=G^i(\varphi\circ\tilde{\pi})(f)
=\der t\Bigvert_{t=0}(\varphi\circ\tilde{\pi})(f_{v_i}(t))
=\der t\Bigvert_{t=0}\varphi((A_{or})_{v_i}(t)).
\end{equation}
In particular, this implies that $\varphi$ is invariant under intrinsic parallel transport if and only if $\phi$ is invariant under the $i$-th frame flow, for $i=1,\ldots k$.

\subsection{Invariance property}

In what follows we assume that $M$ has non-positive curvature operator $\mathcal{R}$. The curvature operator is a linear operator $\mathcal{R}:\Lambda^2(TM)\longrightarrow\Lambda^2(TM)$ defined as
\begin{equation}
\langle\mathcal{R}(X\wedge Y),Z\wedge W\rangle_{\Lambda^2(TM)}=\langle R(X,Y)W,Z\rangle_{TM}
\end{equation}
for all vector fields $X,Y,Z,W$ on $M$.

The curvature operator $\mathcal{R}$ is symmetric and we say that $\mathcal{R}$ is non-positive ($\mathcal{R}\leq 0$) if all of its real eigenvalues are non-positive. 

A manifold $M$ with non-positive curvature operator has non-positive curvature but the inverse implication is not true. For details on this topic we refer the reader to, e.g., \cite{aravinda:10} and \cite{aravinda-farell:04}.

We now prove our main result on $C^{\infty}$-functions on $\grass k M$, i.e. Theorem 1.1.
\bigskip

\noindent\textbf{Theorem \ref{thm:invariance_grass_intro}.}
\emph{Let $M$ be a compact $n$-dimensional Riemannian manifold with $\mathcal{R}\leq 0$, $1\leq k\leq n$, and let $\varphi\in C^{\infty}(\grass k M)$. If $\varphi$ is invariant under all intrinsic parallel transports then it is also invariant under all parallel transports.}

\bigskip

Before proceeding with the proof we first consider the case $k=1$.

As we have already observed, in this case $\grass 1 M=SM$. Moreover, the intrinsic parallel transport corresponds to the geodesic flow and it follows from the proof of Thereom \ref{thm:invariance_grass_intro} that the assumption on the non-positivity of the curvature operator can be weakened to the non-positivity of the sectional curvature. In this case, we recover the following unpublished result of G. Knieper \cite{knieper:preprint}.

\begin{cor}\label{cor:knieper_result}
Let $M$ be a compact Riemannian manifold with non-positive curvature. Let $\varphi\in C^{\infty}(SM)$ invariant under the geodesic flow, then $\varphi$ is also invariant under parallel transport.
\end{cor}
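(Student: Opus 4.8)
The plan is to deduce Corollary \ref{cor:knieper_result} directly from Corollary \ref{cor:invariance} specialized to the case $k=n=1$ is too restrictive, so instead I would work with the full frame bundle $\bundle n M$ over $SM$. First I would observe that since $\grass 1 M = SM$, a function $\varphi \in C^\infty(SM)$ can be pulled back via $\tilde\pi:\bundle n M \to \grass 1 M = SM$ to a function $\phi = \varphi \circ \tilde\pi \in C^\infty(\bundle n M)$. By the identity \eqref{eq:flow_transport}, the invariance of $\varphi$ under the geodesic flow (which is exactly the intrinsic parallel transport when $k=1$) translates into $G^1\phi(f)=0$ for all $f\in\bundle n M$. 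This places us precisely in the hypotheses of Corollary \ref{cor:invariance} with $i=1$.

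Next I would apply Corollary \ref{cor:invariance} to $\phi$, obtaining
\begin{equation*}
\frac{1}{2}\sum_{j=2}^n\|G^j\phi\|^2_{L^2}=\sum_{j=1}^n\int_{\bundle n M}\langle R(w_j,v_j)w_1,v_1\rangle\;d\mu,
\end{equation*}
where $w_j=\grad^{v,j}_{\liealgebra n}\phi(f)$. The crucial point is to control the sign of the right-hand side. Here Lemma \ref{lem:gradv_span} enters decisively: since $\phi$ comes from a function on $\grass 1 M$, for $i=1$ the projected vertical gradient $w_1=\grad^{v,1}_{\liealgebra n}\phi(f)$ lies in $span\{v_2,\ldots,v_n\}$, hence is orthogonal to $v_1$. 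I would then expand the curvature terms and use the symmetries of $R$ together with the orthogonality $\langle w_1,v_1\rangle=0$ to rewrite each summand $\langle R(w_j,v_j)w_1,v_1\rangle$ in terms of the sectional-curvature quadratic form, or at least as an expression whose sign is controlled by non-positivity of the sectional curvature (this is the only place the weakened curvature hypothesis is invoked, rather than $\mathcal{R}\le 0$).

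I expect the sign analysis of the curvature term to be the main obstacle. The goal is to show that the right-hand side is $\le 0$ under the sectional curvature assumption, while the left-hand side $\frac{1}{2}\sum_{j\ge 2}\|G^j\phi\|^2_{L^2}$ is manifestly $\ge 0$. Forcing both sides to vanish then yields $G^j\phi(f)=0$ for all $j=2,\ldots,n$ and (almost every, hence by smoothness every) $f$. Since the $G^j\phi=0$ for all $j$ means $\phi$ is invariant under every frame flow, and the frame flows $F^j_t$ generate all parallel transports of the first vector along arbitrary geodesics, I would conclude via \eqref{eq:flow_transport} that $\varphi$ is invariant under all parallel transports on $SM$. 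The delicate part is arranging the curvature expansion so that non-positivity of sectional curvature (rather than of the full operator $\mathcal{R}$) suffices; this requires that when $k=1$ the double sum collapses onto sectional-curvature terms $\langle R(w_j,v_1)w_j,v_1\rangle$ after exploiting orthogonality and the skew-symmetry of $(\langle w_i,v_j\rangle)_{ij}$, and I would verify carefully that no off-diagonal curvature contributions of indefinite sign survive.
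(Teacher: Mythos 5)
Your proposal is correct and takes essentially the same route as the paper: the paper obtains Corollary \ref{cor:knieper_result} precisely by running the proof of Theorem \ref{thm:invariance_grass_intro} with $k=1$ (pull back via $\tilde{\pi}$ to $\phi=\varphi\circ\tilde{\pi}$ on $\bundle n M$, apply Corollary \ref{cor:invariance} with $i=1$, and use Lemma \ref{lem:gradv_span} together with the skew-symmetry of $(\langle w_i,v_j\rangle)_{ij}$ to collapse the curvature sum, there packaged as $\sum_{j=1}^n w_j\wedge v_j=2\,w_1\wedge v_1$), noting that the resulting $2$-vector is decomposable so non-positive sectional curvature suffices in place of $\mathcal{R}\le 0$. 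The only detail to adjust in your sketch is the anticipated form of the collapsed curvature terms: since Lemma \ref{lem:gradv_span}(i) gives $w_j\in span\{v_1\}$ for $j\ge 2$, the sum reduces via $\langle w_j,v_1\rangle=-\langle w_1,v_j\rangle$ to the single sectional-curvature term $2\langle R(w_1,v_1)v_1,w_1\rangle$ with $w_1\perp v_1$, rather than to a family of terms $\langle R(w_j,v_1)w_j,v_1\rangle$.
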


\begin{proof}[Proof of Thm. \ref{thm:invariance_grass_intro}]
Let $\phi=\varphi\circ\tilde{\pi}\in C^{\infty}(\bundle n M)$.

Since $\varphi$ is invariant under intrinsic parallel transports, $\phi$ is invariant under $G^1,\ldots, G^k$ due to (\ref{eq:flow_transport}).

Considering equation (\ref{eq:invariance}) and summing over $i=1,\ldots,k$ we obtain
\begin{equation}\label{@}
\begin{split}
\frac{k}{2}\sum_{j=1}^n\|G^j\phi\|^2_{L^2(\bundle n M)}& =\int_{\bundle n M}\sum_{i=1}^k\sum_{j=1}^n\langle R(w_j,v_j)v_i,w_i\rangle\;d\mu\\
&=\int_{\bundle n M}\langle\mathcal{R}\Big(\sum_{j=1}^nw_j\wedge v_j\Big),\sum_{i=1}^kw_i\wedge v_i\rangle_{\Lambda^2(TM)}\;d\mu
\end{split}
\end{equation}
where $w_i=\grad^{v,i}_{\liealgebra n}\phi(f)$.

Since the matrix $\big(\langle w_i,v_j\rangle\big)_{i,j}$ is skew-symmetric and making use of Lemma \ref{lem:gradv_span}, we obtain
\begin{align*}
\sum_{j=1}^nw_j\wedge v_j&=\sum_{j=1}^n\sum_{l=1,l\neq j}^n\langle w_j,v_l\rangle v_l\wedge v_i=\sum_{j=1}^k\sum_{l=k+1}^n\langle w_j,v_l\rangle v_l\wedge v_j+\sum_{j=k+1}^n\sum_{l=1}^k\langle w_j,v_l\rangle v_l\wedge v_j\\
&=2\sum_{j=1}^kw_j\wedge v_j.
\end{align*}
Since $\mathcal{R}\leq 0$, the RHS of (\ref{@}) is non-positive forcing the LHS to be zero. 

We conclude that $\phi$ is invariant under all frame flows, and so $\varphi$ is invariant under all parallel transports.  
\end{proof}

In view of Theorem \ref{thm:invariance_grass_intro}, it is natural to investigate density properties of orbits of $k$-planes $A$ under all intrinsic parallel transports. In fact, let $I(A)$ be the set of all $k$-planes obtained by finitely many moves along intrinsic parallel transport and $G(A)$ be the set of all $k$-planes obtained by finitely many moves along general parallel transport. Theorem \ref{thm:invariance_grass_intro} suggests that even though there might be many $k$-planes $A$ such that $I(A)$ is much smaller and not dense in $G(A)$, there might always be a $k$-planes $A'$ arbitrarily close to $A$ such that $I(A')$ is dense in $G(A')$. This is at least true in the case of the flat torus and in the case of constant negative curvature. In the general case, this seems to be a difficult question to answer.

However, we can give an answer to the related, easier question, whether a smooth function $\varphi$ invariant under all intrinsic parallel transports is necessarily constant. 

\bigskip

\noindent\textbf{Proposition \ref{prop:constant_functions_intro}}
\emph{Let $M$ be a non-flat, compact Riemannian manifold with non-positive curvature operator $\mathcal{R}$. Then the following holds:
\begin{itemize}
\item [(i)] If $M$ is either a K\"ahler manifold of real dimension $2n\geq 4$ or a Quaternion-K\"ahler manifold of real dimension $4n\geq 8$ or a locally symmetric space of non-constant curvature (i.e., not the real hyperbolic space), then there exist smooth,\emph{ non-constant} functions on $\grass 2 M$ or $\grass 4 M$ which are invariant under all parallel transports.
\item [(ii)] If $M$ is not one of the exceptions in $(i)$, then, for all $k\leq\dim M$, any smooth function on $\grass k M$ invariant under all parallel transports is necessarily constant.
\end{itemize}}

\begin{proof}
$(i)$ First, let $M$ be a K\"ahler manifold of real dimension $2n\geq 4$. The almost complex structure $J$ is parallel and it gives rise to a smooth function $\varphi$ on oriented $2$-planes, which is invariant under all parallel transports but it is not constant. This function is defined via $\varphi(A_{or})=\langle v_1,Jv_2\rangle$ where $v_1,v_2$ is an oriented orthonormal basis of $A_{or}\in\grass 2 M$.

Secondly, let $M$ be a Quaternion-K\"ahler manifold of real dimension $4n\geq 8$ with non-positive curvature operator. The canonical $4$-forms $\Omega$ globally defined on $M$ is parallel (see, e.g., \cite{gray:69} or \cite{ishihara:74}). This gives rise to the smooth function  $\varphi:\grass 4 M\rightarrow\R$, defined as $\varphi(A_{or})=\Omega_p(v_1,\ldots,v_4)$ where $v_1,\ldots,v_4$ is an oriented orthonormal basis of $A_{or}\in\grass 4 M$ and $A_{or}\in T_pM$. This function is invariant under all parallel transports and non constant.
 
Finally, let $M$ be a locally symmetric space of non-constant non-positive curvature, then $M$ is a compact quotient of a symmetric space with non-constant curvature and its Riemannian curvature tensor is parallel. We consider the function $\varphi\in C^{\infty}(\grass 2 M)$ such that $\varphi(A_{or})=\langle R(v_1,v_2)v_2,v_1\rangle$ where $v_1,v_2$ is an oriented orthonormal basis of $A_{or}\in\grass 2 M$. Now, $\varphi$ is invariant under all parallel transports but it is not constant.

$(ii)$ If $M$ is a $n$-dimensional manifold which is not one of the exceptions above, the holonomy of $M$ is $SO(n)$ (see \cite{berger:03} or \cite{besse:87}). Therefore, any smooth function on $\grass k M$ invariant under all intrinsic parallel transports is also invariant under the non-intrinsic parallel transports by Theorem \ref{thm:invariance_grass_intro} and, hence, is constant due to the transitive action of $SO(n)$ on oriented $k$-planes in the tangent space.                                                    
\end{proof}

\begin{rmk}
It seems to be an open question whether there exist compact non-locally symmetric Quaternion-K\"ahler manifolds with non-positive curvature operator. We are grateful to Vicente Cort\'es for references in connection to this question (\cite{CDL:14}, \cite{CNS:13}, \cite{lebrun:91}, \cite{lebrun:88}).
\end{rmk}



\begin{thebibliography}{100}

\bibitem {ainsworth:13} G.~Ainsworth, \emph{The attenuated magnetic ray transform on surfaces}, Inverse. Probl. Imaging, \textbf{7} (2013), 27--46.

\bibitem {anik-rom:97} Y.~Anikonov, V.~Romanov, \emph{On uniqueness of determination of a form of first degree by its integrals along geodesics}, J. Inverse Ill-Posed Probl, \textbf{5} (1997), 467--480. 

\bibitem {aravinda:10} C.~S.~Aravinda, \emph{Curvature vs. Curvature Operator}, Special issue of the RMS Newsletter commemorating ICM 2010 in India, \textbf{19} (2010), 109--118.

\bibitem {aravinda-farell:04} C.~S.~Aravinda, T.~Farell, \emph{Nonpositivity: curvature vs curvature operator}, Proceedings of the American Mathematical Society, \textbf{133} (2005), 191--192.

\bibitem {ballmann:} W.~Ballmann, \emph{Lectures on K\"ahler manifolds}, European Mathematical Society, 2006.

\bibitem {berger:03} M.~Berger, \emph{A panoramic view of Riemannian Geometry}, Springer-Verlag, Berlin, 2003.

\bibitem {besse:87} A.~Besse, \emph{Einstein Manifolds}, Springer-Verlag, Berlin, 1987.

\bibitem {brin-gromov:80} M.~Brin, M.~Gromov, \emph{On the ergodicity of frame flows}, Inventiones Math., \textbf{60} (1980), 1--7.

\bibitem {brin-karcher:84} M.~Brin, H.~Karcher, \emph{Frame flows on manifolds with pinched negative curvature}, Compositio Math., \textbf{52} (3) (1984), 275--297.

\bibitem {burns-pollicott:03} K.~Burns, M.~Pollicott, \emph{Stable ergodicity and frame flows}, Geom. Dedicata, \textbf{98} (2003), 189--210.

\bibitem {CDL:14} V.~Cort\'es, M.~Dyckmanns, D.~Lindemann,\emph{Classification of complete projective special real surfaces} Proc. London Math. Soc., \textbf{109} (3) (2014), 423–-445.

\bibitem {CNS:13} V.~Cort\'es, M.~Nardmann, S.~Suhr, \emph{Completeness of hyperbolic centroaffine hypersurfaces}, arXiv:1305.3549.

\bibitem {croke-shar:98} C.~Croke, V.~A.~Sharafutdinov, \emph{Spectral rigidity of compact negatively curved manifolds}, Topology, \textbf{37} (1998), 1265--1273.

\bibitem {dar-pat:05} N.~S.~Dairbekov, G.~P.~Paternain, \emph{Longitudinal KAM-cocycles and action spectra of magnetic flows}, Math. Res. Lett., \textbf{12} (2005), 719--729.

\bibitem {dar-pat:08} N.~S.~Dairbekov, G.~P.~Paternain, \emph{On the cohomological equation of magnetic flows}, Matem\'atica Contempor\^anea, \textbf{34} (2008), 155--193.

\bibitem {dar-pat:082} N.~S.~Dairbekov, G.~P.~Paternain, \emph{Rigidity properties of Anosov optical hypersurfaces}, Ergodic Theory and Dynam. Systems, \textbf{28} (2008), 707--737.

\bibitem {dar-pat-etal:07} N.~S.~Dairbekov, G.~P.~Paternain, P.~Stefanov, G.~Uhlmann, \emph{The boundary rigidity problem in the presence of a magnetic field}, Adv. Math., \textbf{21} (2007), 535--609.

\bibitem {feres:98} R.~Feres, \emph{Dynamical systems and semisimple groups. An introduction.}, Cambridge University Press, Cambridge, 1998. 

\bibitem {gray:69} A.~Grey, \emph{A note on manifolds whose holonomy group is a subgroup of $Sp(n)\cdot Sp(1)$}, Michigan Math. J., \textbf{16} (2) (1969), 125--128.

\bibitem {gui-kaz:80} V.~Guillemin, D.~Kazhadan, \emph{Some inverse spectral results for negatively curved 2-manifolds}, Topology, \textbf{19} (1980), 301--312.

\bibitem {ishihara:74} S.~Ishihara, \emph{Quaternian K\"ahlerian manifolds}, J. Differential Geometry, \textbf{9} (1974), 483--500.

\bibitem {knieper:02} G.~Knieper, \emph{Hyperbolic dynamics and Riemannian geometry}, in  Handbook of dynamical systems, Vol. 1A, 453--545, North-Holland, Amsterdam, 2002.

\bibitem {knieper:preprint}  G.~Knieper, \emph{A commutator formula for the geodesic flow on a compact Riemannian manifold}, pre-print.

\bibitem {KNI} S.~Kobayashi, K.~Nomizu, \emph{Foundations of Differential Geometry}, Vol I, Wiley Interscience Publishers, John Wiley and Sons Inc., New York, 1996.

\bibitem {lebrun:91} C.~LeBrun, \emph{On complete Quaternionic-K\"ahler manifolds}, Duke Math. J., \textbf{63} (3) (1991), 723--743.

\bibitem{lebrun:88} C.~LeBrun, \emph{A rigidity theorem for Quaternionic-K\"ahler manifolds}, Proc. Am. Math. Soc, \textbf{103} (4) (1988), 1205--1208.

\bibitem {maus:05} N.~Maus, \emph{Der geod\"atische Fluss auf symmetrischen R\"aumen}, Diplomarbeit Bochum, 2005.

\bibitem {min-oo:85} M.~Min-Oo, \emph{Spectral rigidity for manifolds with negative curvature operator}, in Nonlinear problems in geometry, 99--103, Contemp. Math., \textbf{51}, Amer. Math. Soc., Providence, RI, 1986.

\bibitem {pat-salo-uhl:13} G.~P.~Paternain, M.~Salo, G.~Uhlmann, \emph{Tensor tomography on surfaces}, Invent. Math., \textbf{193} (2013), 229--247.

\bibitem {pat-salo-uhl:14} G.~P.~Paternain, M.~Salo, G.~Uhlmann, \emph{Tensor tomography: progress and challenges}, Chin. Ann. Math., \textbf{35B} (3) (2014), 399--428.

\bibitem {pes-shar:88} L.~N.~Pestov, V.~A.~Sharafutdinov, \emph{Integral Geometry of tensor fields on a manifold of negative curvature}, (Russian) Sibirsk. Mat. Zh. \textbf{29} (3) (1988) 114--130, 221; translation in Siberian Math. J. \textbf{29} (3) (1988), 427--441.

\bibitem {shar:book} V.~A.~Sharafutdinov, \emph{Integral Geometry of Tensor Fields}, VSP, Utrecht, the Netherlands, 1994.

\end{thebibliography}
\end{document}